\newcolumntype{d}[1]{D{.}{.}{#1}}
\newcommand{\eqref}[1]{(\ref{#1})}
\def\cal{\mathcal}
\newtheorem{thmm}{Theorem}[section] 
\newtheorem{lemma}{Lemma}[section] 
\newcommand{\eps}{\varepsilon}
\newcommand{\goto}{\rightarrow}
\newcommand{\margmin}{\operatorname{argmin}}
\newcommand{\cG}{{\cal G}}
\newcommand{\cU}{{\cal U}}
\newcommand{\hamm}{\operatorname{Hamm}}
\newcommand{\sgn}{\operatorname{sgn}}
\newcommand{\ty}{\tilde{Y}}
\newcommand{\hb}{\hat{\beta}}
\newcommand{\call}{{\cal I}}
\newcommand{\callJ}{{\cal J}}
\newcommand{\cM}{\mathcal{M}}
\newcommand{\tq}{{\cal Q}}
\newcommand{\tomega}{\tilde{\omega}}
\begin{document}
\begin{frontmatter}

\title{Covariate assisted screening and estimation}
\runtitle{Covariate assisted screening}

\begin{aug}
\author[A]{\fnms{Zheng Tracy}~\snm{Ke}\corref{}\thanksref{T1,T2,T3}\ead[label=e1]{zke@galton.uchicago.edu}},
\author[B]{\fnms{Jiashun}~\snm{Jin}\thanksref{T2}\ead[label=e2]{jiashun@stat.cmu.edu}}
\and
\author[C]{\fnms{Jianqing}~\snm{Fan}\thanksref{T1}\ead[label=e3]{jqfan@princeton.edu}}
\runauthor{Z.~T. Ke, J. Jin and J. Fan}
\thankstext{T1}{Supported in part by NSF Grant DMS-07-04337,
the National Institute of General Medical Sciences of the National
Institutes
of Health Grants R01GM100474 and R01-GM072611.}
\thankstext{T2}{Supported in part by NSF CAREER award DMS-09-08613.}
\thankstext{T3}{The major work of this article was completed when Z. Ke
was a
graduate student at Department of Operations Research and Financial
Engineering, Princeton University.}

\affiliation{University of Chicago, Carnegie Mellon University and
Princeton~University}

\address[A]{Z. T. Ke\\
Department of Statistics\\
University of Chicago\\
Chicago, Illinois 60637\\
USA\\
\printead{e1}}

\address[B]{J. Jin\\
Department of Statistics\\
Carnegie Mellon University\\
Pittsburgh, Pennsylvania 15213\\
USA\\
\printead{e2}}

\address[C]{J. Fan\\
Department of Operations Research\\
\quad and Financial Engineering\\
Princeton University\\
Princeton, New Jersey 08544\\
USA\\
\printead{e3}}

\end{aug}

\received{\smonth{11} \syear{2013}}
\revised{\smonth{4} \syear{2014}}

%
\begin{abstract}
Consider a linear model $Y = X \beta+ z$, where $X = X_{n,p}$ and $z
\sim N(0, I_n)$.
The vector $\beta$ is unknown but is sparse in the sense that most of
its coordinates are $0$. The main interest is to separate its nonzero
coordinates from the zero ones (i.e., variable selection). Motivated by
examples in long-memory time series
(Fan and Yao [\textit{Nonlinear Time Series: Nonparametric and Parametric Methods}
(2003) Springer]) and the
change-point problem
(Bhattacharya [In \textit{Change-Point Problems ({S}outh {H}adley, MA, 1992)}
(1994) 28--56 IMS]),
we are primarily interested in the case where the Gram matrix $G=X'X$
is \textit{nonsparse}
but \textit{sparsifiable}
by a finite order linear filter. We focus on the regime where signals
are both \textit{rare and weak} so that successful variable selection
is very challenging but is still possible.

We approach this problem by a new procedure called the \textit
{covariate assisted screening and estimation} (CASE). CASE first uses a
linear filtering to reduce the original setting to a new regression
model where the corresponding Gram (covariance) matrix is sparse.
The new covariance matrix induces a sparse graph, which guides us to
conduct multivariate screening without visiting all the submodels. By
interacting with the signal sparsity, the graph enables us to decompose
the original problem into many separated small-size subproblems (if
only we know where they are!). Linear filtering also induces a
so-called problem of \textit{information leakage}, which can be
overcome by the newly introduced \textit{patching} technique. Together,
these give rise to CASE, which is a two-stage \textit{screen and clean} [Fan and
Song \textit{Ann. Statist.} \textbf{38} (2010) 3567--3604; Wasserman
and Roeder \textit{Ann. Statist.} \textbf{37} (2009) 2178--2201]
procedure, where we first identify candidates of these submodels by
\textit{patching and screening}, and then re-examine each candidate to
remove false positives.

For any procedure $\hat{\beta}$ for variable selection, we measure the
performance by the minimax Hamming distance between the sign vectors of
$\hat{\beta}$ and $\beta$.
We show that in a broad class of situations where the Gram matrix is
nonsparse but sparsifiable, CASE achieves the optimal rate of
convergence. The results are successfully applied to long-memory time
series and the change-point model.
\end{abstract}

%
\begin{keyword}[class=AMS]
\kwd[Primary ]{62J05}
\kwd{62J07}
\kwd[; secondary ]{62C20}
\kwd{62F12}
\end{keyword}

\begin{keyword}
\kwd{Asymptotic minimaxity}
\kwd{graph of least favorables (GOLF)}
\kwd{graph of strong dependence (GOSD)}
\kwd{Hamming distance}
\kwd{multivariate screening}
\kwd{phase diagram}
\kwd{rare and weak signal model}
\kwd{sparsity}
\kwd{variable selection}
\end{keyword}
\end{frontmatter}

\section{Introduction} \label{sec:intro}
Consider a linear regression model\label{p1}
%
\begin{equation}
\label{model} Y=X\beta+ z,\qquad X = X_{n,p}, z\sim N\bigl(0,
\sigma^2I_n\bigr).
\end{equation}
The vector $\beta$ is unknown but is sparse, in the sense that only a
small fraction of its coordinates is nonzero. The goal is to separate
the nonzero coordinates of $\beta$ from the zero ones (i.e., variable
selection).
We assume $\sigma$, which is the standard deviation of the noise,
is known
and set $\sigma=1$ without loss of generality.

In this paper, we assume the Gram matrix
%
\begin{equation}
\label{GramMatrix} G = X'X
\end{equation}
is normalized so that all of the diagonals are $1$, instead of $n$ as
is often used in
the literature. The difference between two normalizations is nonessential,
but the signal vector $\beta$ are different by a factor of $\sqrt{n}$.

We are primarily interested in the cases where:
\begin{itemize}
\item the signals (nonzero coordinates of $\beta$) are rare (or
sparse) and weak;
\item the Gram matrix $G$ is \textit{nonsparse} or even ill-posed
(but it may be \textit{sparsified} by some simple operations; see
details below).
\end{itemize}
In such cases, the problem of variable selection is new and challenging.

While signal rarity is a well-accepted concept, signal weakness is an important
but a largely neglected notion, and many contemporary researches on
variable section have been focused on the regime where the signals are
\textit{rare but strong}.
However, in many scientific experiments, due to
the limitation in technology and constraints in resources,
the signals are unavoidably weak. As a result, the signals are hard to
find, and it is easy to be fooled.
Partially, this explains why many published works (at least in some
scientific areas)
are not reproducible; see, for example, \citet{Ioannidis}.

We call $G$ \textit{sparse} if each of its rows has relatively few
``large'' elements, and we call $G$ sparsifiable if $G$ can be reduced
to a sparse matrix by some simple operations (e.g., linear filtering or
low-rank matrix removal). The Gram matrix plays a critical role in
sparse inference, as the sufficient statistics $X'Y \sim N(G\beta, G)$.
Examples where $G$ is nonsparse but sparsifiable can be found in the
following application areas:
\begin{itemize}
\item\textit{Change-point problem}. Recently, driven by researches on
DNA copy number variation, this problem has received a resurgence of
interest [\citet{SaRa,Olshen04,TibWang}]. While existing literature
focuses on \textit{detecting} change-points,
\textit{locating} change-points is also of major interest in many
applications [\citet{Andr02,Siegmundperson,ZhangSieg}]. Consider a
change-point model
%
\begin{equation}
\label{changept} Y_i = \theta_i + z_i,\qquad
z_i \stackrel{\mathrm{i.i.d.}} {\sim} N(0,1), 1 \leq i \leq p,
\end{equation}
where $\theta= (\theta_1, \ldots, \theta_p)'$ is a piece-wise constant
vector with jumps at relatively few locations. Let $X = X_{p, p}$ be
the matrix such that $X(i,j) = 1\{j \geq i\}$, $1 \leq i, j \leq p$.
We re-parametrize the parameters by
\[
\theta= X \beta \qquad\mbox{where } \beta_k = \theta_{k} -
\theta _{k+1}, 1 \leq k \leq p-1, \mbox{ and } \beta_p =
\theta_p,
\]
so that $\beta_k$ is nonzero if and only if $\theta$ has a jump at
location $k$.
The Gram matrix~$G$ has elements $G(i,j) = \min\{i,j\}$, which is
evidently nonsparse. However, adjacent rows of $G$ display a high
level of similarity, and the matrix
can be sparsified by a second order adjacent differencing between the rows.
\item\textit{Long-memory time series}.
We consider using time-dependent data to build a prediction model for
variables of interest,
$Y_t= \sum_j\beta_jX_{t-j}+\varepsilon_t$,
where $\{X_t\}$ is an observed stationary time series and $\{\varepsilon
_t\}$ are white noise. In many applications, $\{X_t\}$ is a long-memory
process. Examples include volatility process [\citet{FanTimeS,RayTsay}],
exchange rates, electricity demands, and river's outflow (e.g., the
Nile's). Note that the problem can be reformulated as~\eqref{model},
where the Gram matrix $G=X'X$ is asymptotically close to the
auto-covariance matrix of $\{X_t\}$ (say, $\Omega$). It is well known
that $\Omega$ is Toeplitz, the off-diagonal decay of which is very slow
and the matrix $L^{1}$-norm which diverges as $p \goto\infty$.
However, the Gram matrix can be sparsified by a first order adjacent
differencing between the rows.
\end{itemize}
Further examples include jump detections in (logarithm) asset prices
and time series following a FARIMA model [\citet{FanTimeS}].
Still other examples include the factor models, where $G$ can be
decomposed as
the sum of a sparse matrix and a low rank (positive semi-definite)
matrix. In these examples,
$G$ is nonsparse, but it can be sparsified either by adjacent row
differencing or low-rank matrix removal.

\subsection{Nonoptimality of $L^0$-penalization method for rare and weak signals} \label{subsec:L0nonopt}
When the signals are rare and strong, the problem of variable selection
is more or less well understood. In particular, \citet{DonohoStarck}
[see also \citet{DonohoHuo}] have investigated the \textit{noiseless
case} where they reveal a fundamental phenomenon. In detail, when there
is no noise, model (\ref{model}) reduces to
$Y = X \beta$. Now, suppose $(Y, X)$ are given, and
consider the equation $Y = X \beta$. In the general
case where $p > n$, it was shown in \citet{DonohoStarck} that under
mild conditions
on $X$, while the equation $Y = X \beta$ has infinitely many solutions,
there is
\textit{a unique} solution that is \textit{very sparse}. In fact, if
$X$ is full rank and
this sparsest solution has $k$ nonzero elements,
then all other solutions have at least $(n - k + 1)$ nonzero
elements; see Figure~\ref{Fig:add} (left).

\begin{figure}
\includegraphics{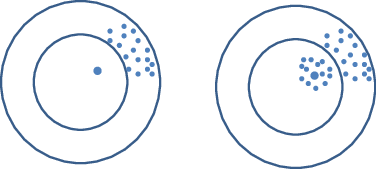}
\caption{Illustration for solutions of $Y = X \beta+ z$ in the
noiseless case (left; where $z = 0$)
and the strong noise case (right).
Each dot represents a solution (the large dot is the ground truth),
where the distance to the center is the $L^0$-norm
of the solution. In the noiseless case, we only have one very sparse
solution, with all other being much denser. In the strong noise case,
signals are rare and weak, and we have
many very sparse solutions that have comparable sparsity to that of the
ground truth.} \label{Fig:add}
\end{figure}

In the spirit of Occam's razor, we have reason to believe that
this \textit{unique sparse} solution is the ground truth we
are looking for. This motivates the well-known
method of $L^0$-penalization, which looks for the sparsest
solution where the sparsity is measured by the $L^0$-norm.
In other words, in the noiseless case, the $L^0$-penalization
method is a ``fundamentally correct'' (but computationally
intractable) method.

In the past two decades, the above observation has motivated a long
list of
\textit{computable global penalization methods}, including
but not limited to the lasso, SCAD, MC$+$,
each of which hopes to produce solutions that approximate
that of the $L^0$-penalization method.

These methods usually use a theoretic framework that
contains four intertwined components: ``signals
are rare but strong,'' ``the true $\beta$ is the sparsest
solution of $Y = X \beta$,'' ``probability of exact recovery
is an appropriate loss function'' and ``$L^0$-penalization
method is a fundamentally correct method.''

Unfortunately, the above framework is no longer appropriate
when the signals are rare and weak.
First, the fundamental phenomenon found in \citet{DonohoStarck}
is no longer true. Consider the equation $Y = X \beta+ z$, and
let $\beta_0$ be the ground truth. We can produce many vectors $\beta
$ by
perturbing $\beta_0$ such that two models $Y = X \beta+ z$ and $Y = X
\beta_0 + z$
are indistinguishable (i.e., all tests---computable or not---are
asymptotically powerless). In other words, the equation $Y = X \beta+
z$ may have many \textit{very sparse} solutions,
where the ground truth is not necessarily the sparsest one; see Figure~\ref{Fig:add} (right).

In summary, when signals are rare and weak:
\begin{itemize}
\item The situation is more complicated than that considered by
\citet{DonohoStarck}, and the principle of Occam's razor is less relevant.
\item``Exact recovery'' is usually impossible, and the Hamming distance
between the sign vectors of $\hat{\beta}$ and $\beta$ is a more
appropriate loss function.
\item The $L^0$-penalization method is not ``fundamentally correct'' if
the signals are rare/weak and the Hamming distance is the loss function.
\end{itemize}
For example, it was shown in \citet{UPS} that in the rare/weak regime,
even when $X$ is very simple and when the tuning parameter is ideally
set, the $L^0$-penalization method is not rate optimal in terms of the
Hamming distance.
See \citet{UPS} for details.

\subsection{Limitation of UPS}\label{subsec:gs}
That the $L^0$-penalization method is rate nonoptimal implies that
many other penalization methods (such as the lasso, SCAD, MC$+$) are also
rate nonoptimal in the rare/weak regime.

What could be rate optimal procedures in the rare/weak regime?
To address this, \citet{UPS} proposed a method called
\textit{univariate penalization screening (UPS}), and showed that UPS
achieves the
optimal rate of convergence in Hamming distance under certain conditions.

UPS is a two-stage screen and clean method [\citet{Wasserman}], at the heart
of which is marginal screening. The main challenge that marginal screening
faces is the so-called phenomenon of \textit{signal cancellation}, a
term coined
by \citet{Wasserman}. The success
of UPS hinges on relatively strong conditions [e.g., see \citet
{Genovese}], under which signal cancellation
has negligible effects.

\subsection{Advantages and disadvantages of sparsifying}
Motivated by the application examples aforementioned, we are interested
in the rare/weak
cases where $G$ is nonsparse but can be sparsified by a finite-order
linear filtering.
That is, if we denote the linear filtering by a $p \times p$ matrix
$D$, then the matrix
$DG$ is sparse in the sense that each row has relatively few large
entries, and all other entries
are relatively small.

In such challenging cases, we should not expect the $L^0$-penalization
method or the
UPS to be rate optimal; this motivates us to develop a new approach.

Our strategy is to use sparsifying and so to exploit the sparsity of
$DG$. Multiplying both sides of (\ref{model}) by $X'$ and then by $D$ gives
%
\begin{equation}
\label{modeladd} d = D G \beta+ N\bigl(0, D G D'\bigr),\qquad d \equiv D
\tilde{Y}, \tilde{Y} \equiv X' Y.
\end{equation}
On one hand, sparsifying is helpful for both matrices $DG$ and $DGD'$
are sparse,
which can be largely exploited to develop
better methods for variable selection.
On the other hand, ``there is no free lunch,'' and sparsifying
also causes serious issues:
\begin{itemize}
\item The post-filtering model (\ref{modeladd}) is not a regular
linear regression model.
\item If we apply a local method (e.g., UPS, forward/backward
regression) to model~(\ref{modeladd}), we face so-called challenge of
\textit{information leakage}.
\end{itemize}
In Section~\ref{subsec:patch}, we carefully
explain the issue of information leakage, and discuss how to deal with it.

We remark that while sparsifying can be very helpful, it does not mean
that it is trivial to derive optimal procedures from model (\ref{modeladd}).
For example, if we apply the $L^0$-penalization method naively to model
\eqref{modeladd},
we then ignore the correlations among the noise, which cannot be optimal.
If we apply the $L^0$-penalization method with the correlation
structures incorporated, we are
essentially applying it to the original regression model (\ref{model}).

\subsection{Covariate assisted screening and estimation (CASE)}
To exploit the sparsity in $DG$ and $DGD'$, and to deal with the two
aforementioned issues that
sparsifying causes, we propose a new variable selection method which we call
\textit{covariate assisted screening and estimation} (CASE).
The main methodological innovation of CASE is
to use linear filtering to create graph sparsity
and then to exploit the rich information hidden in the ``local''
graphical structures among
the design variables, which the lasso and many other procedures do not utilize.

At the heart of CASE is \textit{covariate assisted} multivariate screening.
Screening is a well-known method of dimension reduction in big data.
However, most literature to date has been focused on \textit{univariate
screening} or \textit{marginal screening} [\citet{FanLv,Genovese}].
Extending marginal screening to (brute-force) $m$-variate screening, $m
> 1$, means that we screen
all ${p\choose m}$ size-$m$ sub-models, and has two major concerns:
\begin{itemize}
\item\textit{Computational infeasibility}. A brute-force $m$-variate
screening has a computation
complexity of $O(p^m)$, which is usually not affordable.
\item\textit{Screening inefficiency}. The goal of screening is to
remove as many noise entries as we can while
retaining most of the signals. When we screen too many submodels than
necessary, we have
to set the bar higher than necessary to exclude most of the noise
entries. As a result, we need
signals stronger than necessary in order for them to survive the screening.
\end{itemize}
To overcome these challenges, CASE uses a new screening strategy called
covariance-assisted screening, which
excludes most size-$m$ submodels from
screening but still manages to retain almost all signals.
In detail, we first use the Gram matrix $G$ to construct a sparse graph
called \textit{graph of strong dependence} (GOSD).
We then include a size-$m$ submodel in our screening list if and only
if the $m$ nodes form a connected subgraph of GOSD. This way, we
exclude many submodels from the screening by only using information in
$G$, not that in the response vector $Y$!

The blessing is, when GOSD is sufficiently sparse, it has no more than
$L_p p$ connected size-$m$ sub-graphs, where $L_p$ is a generic
multi-$\log(p)$ term. Therefore, covariance-assisted screening only
visits $L_p p$ submodels,
in contrast to ${p\choose m}$ submodels the brute-forth screening
visits. As a result, covariance-assisted screening is not only computationally
feasible, but is also efficient. Now, it would not be a surprise that
CASE is a ``fundamentally correct'' procedure in the rare/weak regime,
at least when the GOSD is sufficiently sparse, as in settings
considered in this paper; see more discussion below.

\subsection{Objective of the theoretic study}
We now discuss the theoretic component of the paper. The objective of
our theoretic study is three-fold:
\begin{itemize}
\item to develop a theoretic framework that is appropriate for the
regime where signals are rare/weak, and $G$ is nonsparse but is sparsifiable;
\item to appreciate the ``pros'' and ``cons'' of sparsifying, and to
investigate how to fix the ``cons'';
\item to show that CASE is asymptotic minimax and yields an optimal
partition of the so-called \textit{phase diagram}.
\end{itemize}
The phase diagram is a relatively new criterion for assessing
the optimality of procedures. Call the two-dimensional
space calibrated by the \textit{signal rarity} and \textit{signal strength}
the phase space. The phase diagram is the partition of the phase
space into different regions where in each of them inference
is distinctly different. The notion of phase diagram is especially appropriate
when signals are rare and weak.

The theoretic study is challenging for many reasons:
\begin{itemize}
\item We focus on a very challenging regime, where signals are rare and
weak, and the design matrix is nonsparse or even ill-posed. Such a
regime is important from a practical perspective, but has not been
carefully explored in the literature.
\item The goal of the paper is to develop procedures in the rare/weak
regime that are asymptotic minimax in terms of Hamming distance, to
achieve which we need to find a lower bound and an upper bound
that are both tight. Compared to most works on variable selection where
the goal is to find procedures that yield exact recovery for
sufficiently strong signals, our goal is comparably
more ambitious, and the study it entails is more delicate.
\item To derive the phase diagrams in Sections~\ref{subsec:Toeplitz}--\ref{subsec:chang}, we need explicit forms of
the convergence
rate of minimax Hamming selection errors. This usually
needs very delicate analysis. The study associated with the
change-point model is especially
challenging and long.
\end{itemize}

\subsection{Content and notation}
The paper is organized as follows. Section~\ref{sec:Main} contains the
main results of this paper: we formally introduce CASE and establishe
its asymptotic optimality. Section~\ref{sec:Simul} contains simulation
studies, and Section~\ref{sec:Discu} contains conclusions and discussions.

Throughout this paper, $D=D_{h,\eta}$, $d=D(X'Y)$, $B=DG$, $H=DGD'$ and
${\cal G}^*$ denotes the GOSD (in contrast, $d_p$ denotes the degree of
GOLF, and $H_p$ denotes the Hamming distance). Also, $\mathbb{R}$ and
$\mathbb{C}$ denote the sets of real numbers and complex numbers,
respectively, and $\mathbb{R}^p$ denotes the $p$-dimensional real
Euclidean space. Given $0\leq q\leq\infty$, for any vector $x$,
$\| x \|_q$ denotes the $L^q$-norm of $x$; for any matrix $M$,
$\| M \|_q$ denotes the matrix $L^q$-norm of $M$. When $q = 2$, $\|M\|
_q$ coincides with the matrix spectral norm; we shall omit the
subscript $q$ in this case.
When $M$ is symmetric, $\lambda_{\max}(M)$ and $\lambda_{\min}(M)$
denote the maximum and minimum eigenvalues of $M$, respectively. For two
matrices $M_1$ and $M_2$, $M_1\succeq M_2$ means that $M_1-M_2$ is
positive semi-definite.

\section{Main results} \label{sec:Main}
This section is arranged as follows.
Sections~\ref{subsec:RW}--\ref{subsec:compu} focus on the model, ideas
and the method.
In Section~\ref{subsec:RW}, we introduce the rare and weak signal
model. In Section~\ref{subsec:linea}, we formally introduce the notion
of \textit{sparsifiability}. The starting point of CASE is the use of a
linear filter.
In Section~\ref{subsec:LFbenefit}, we explain how linear filtering
helps in variable selection by
inducing a sparse graph and an interesting interaction between the
graphical sparsity and the signal sparsity.
In Section~\ref{subsec:patch}, we explain that linear filtering also
causes a so-called
problem of \textit{information leakage}, and discuss how to overcome
such a problem by the technique
of \textit{patching}. After all these ideas are discussed, we formally
introduce the CASE in Section~\ref{subsec:ags}.
In Section~\ref{subsec:compu}, we discuss the computational complexity
and show that CASE is computationally feasible
in a broad context.

Sections~\ref{subsec:minimax}--\ref{subsec:main} focus on the
asymptotic optimality of CASE.
In Section~\ref{subsec:minimax}, we
introduce the asymptotic minimax framework where we use Hamming
distance as the loss function.
In Section~\ref{subsec:LB}, we study the lower bound for the
minimax Hamming risk, and in Section~\ref{subsec:main}, we show that
CASE achieves the minimax Hamming risk in a broad context.

In Sections~\ref{subsec:Toeplitz}--\ref{subsec:chang}, we apply our
results to long-memory time series and the change-point model. For both
of them, we first derive explicit formulas for the convergent rates,
and then use the formulas to derive the phase diagrams.

Proofs of results in this section can be found in the supplemental article
[\citet{CASEsupp}], which contains Sections A--C. 

\subsection{Rare and weak signal model} \label{subsec:RW}
Our primary interest is in the situations where the signals are rare
and weak, and where we have \textit{no} information on the underlying
structure of the signals. In such situations, it makes
sense to use the following \textit{rare and weak} signal model; see
\citet{Candes,DonohoJin2008,JZ11}. Fix $\eps\in(0,1)$ and $\tau>
0$. Let $b = (b_1, \ldots, b_p)'$ be the $p \times1$ vector satisfying
%
\begin{equation}
\label{Defineb} b_i \stackrel{\mathrm{i.i.d.}} {\sim} \operatorname{Bernoulli}(
\eps),
\end{equation}
and let $\Theta_p(\tau)$ be the set of vectors
%
\begin{equation}
\label{DefineTheta} \Theta_p(\tau) = \bigl\{ \mu\in\mathbb{R}^p
\dvtx |\mu_i| \geq\tau, 1 \leq i \leq p\bigr\}.
\end{equation}
We model $\beta$ by
%
\begin{equation}
\label{sparse signal model on beta} \beta= b \circ\mu,
\end{equation}
where $\mu\in\Theta_p(\tau)$ and $\circ$ is the Hadamard product
(also called the coordinate-wise product). In Section~\ref{subsec:minimax}, we further restrict $\mu$ to a subset of $\Theta
_p(\tau)$.

In this model, $\beta_i$ is either $0$ or a signal with a strength
$\geq\tau$. Since we have no information on where the signals are, we assume
that they appear at locations that are randomly generated.
We are primarily interested in the challenging case where $\eps$ is small
and $\tau$ is relatively small, so the signals are both rare and weak.

\begin{definition}
We call model \eqref{Defineb}--\eqref{sparse signal model on beta} the
rare and weak signal model RW$(\eps, \tau, \mu)$.
\end{definition}

We remark that the theory developed in this paper is not tied to the
rare and weak signal model, and applies to more general cases. For
example, the main results can be extended
to the case where we have some additional information about the
underlying structure of the signals (e.g., Ising's model [\citet{Ising}]).

\subsection{Sparsifiability, linear filtering and GOSD} \label{subsec:linea}
As mentioned before, we are primarily interested in the case where the
Gram matrix $G$ can be sparsified
by a finite-order linear filtering.

Fix an integer $h \geq1$ and an $(h+1)$-dimensional vector $\eta= (1,
\eta_1, \ldots, \eta_h)'$. Let $D = D_{h, \eta}$ be the $p \times p$
matrix satisfying
%
\begin{eqnarray}
\label{DefineD} D_{h, \eta}(i,j) = 1\{ i = j\} + \eta_1 1\{ i =
j -1\} + \cdots+ \eta _h 1\{i = j - h\},
\nonumber
\\[-8pt]
\\[-8pt]
 \eqntext{1 \leq i, j \leq p.}
\end{eqnarray}
The matrix $D_{h, \eta}$ can be viewed as a linear operator that maps
any $p \times1$ vector
$y$ to $D_{h, \eta} y$. For this reason, $D_{h, \eta}$ is also called
an order $h$ linear filter [\citet{FanTimeS}].

For $\alpha> 0$ and $A_0 > 0$, we introduce the following class of matrices:
%
\begin{eqnarray}
\label{matrixclass}&& \mathcal{M}_p(\alpha,A_0)=\bigl\{ \Omega
\in\mathbb{R}^{p\times p}\dvtx \Omega (i,i)\leq1, \bigl|\Omega(i,j)\bigr|\leq
A_0\bigl(1 + |i-j|\bigr)^{-\alpha},
\nonumber
\\[-8pt]
\\[-8pt]
\nonumber
&& \hspace*{258pt}1 \leq i, j \leq p \bigr\}.
\end{eqnarray}
Matrices in $\mathcal{M}_p(\alpha,A_0)$ are not necessarily symmetric.
%
\begin{definition}\label{def:sparse}
Fix an order $h$ linear filter $D = D_{h, \eta}$. We say that $G$ is
sparsifiable by $D_{h, \eta}$ if for sufficiently large $p$, $DG\in
\mathcal{M}_p(\alpha, A_0)$ for some constants $\alpha>1$ and $A_0 > 0$.
\end{definition}
In the long-memory time series model, $G$ can be sparsified by an order
$1$ linear filter. In the change-point model, $G$ can be sparsified by
an order $2$ linear filter.

The main benefit of linear filtering is that it induces sparsity in the
graph of strong dependence (GOSD) to be introduced below. Recall that
the sufficient statistics $\ty= X'Y \sim N(G \beta, G)$.
Applying a linear filter $D = D_{h, \eta}$ to $\ty$ gives
%
\begin{equation}
\label{dmodel} d \sim N(B \beta, H),
\end{equation}
where $d = D (X'Y)$, $B = DG$ and $H = D G D'$.
Note that no information is lost when we reduce from the model $\ty
\sim
N(G\beta, G)$ to model \eqref{dmodel}, as $D$ is a nonsingular matrix.

At the same time, if $G$ is sparsifiable by $D = D_{h, \eta}$, then
both the matrices $B$ and $H$ are sparse, in the sense that each row of
either matrix has relatively few large coordinates. In other words, for
a properly small threshold $\delta> 0$ to be determined, let
$B^*$ and $H^*$ be the regularized matrices of $B$ and $H$, respectively,
\begin{eqnarray*}
B^*(i,j) &= &B(i,j) 1\bigl\{\bigl |B(i, j)\bigr| \geq\delta\bigr\},\\
 H^*(i,j) &=& H(i,j) 1\bigl
\{ \bigl|H(i,j)\bigr| \geq\delta\bigr\},\qquad 1 \leq i, j \leq p.
\end{eqnarray*}
It is seen that
%
\begin{equation}
\label{dmodel1} d \approx N\bigl(B^* \beta, H^*\bigr),
\end{equation}
where each row of $B^*$ or $H^*$ has relatively few nonzeros.
Compared to \eqref{dmodel}, \eqref{dmodel1} is much easier to track
analytically, but
it contains
almost all the information about~$\beta$.

The above observation naturally motivates the following graph, which we
call the \textit{graph of strong dependence} (GOSD).
%
\begin{definition}\label{DefineGOSD}
For a given parameter $\delta$, the GOSD is the
graph $\cG^* = (V, E)$ with nodes $V = \{1, 2, \ldots, p\}$, and there
is an edge between $i$ and $j$ when
any of the three numbers $H^*(i,j)$, $B^*(i,j)$ and $B^*(j,i)$ is nonzero.
\end{definition}

%
\begin{definition}
A graph ${\cal G}= (V, E)$ is called $K$-sparse if the degree of each node
$\leq K$.
\end{definition}
The definition of GOSD depends on a tuning parameter $\delta$, the
choice of which is not critical, and it is generally sufficient if we
choose $\delta= \delta_p = O(1/\log(p))$; see Section B.1 in \citet{CASEsupp}%
for details. With such a choice of $\delta$, it can be shown that in a
general context, GOSD is $K$-sparse, where $K = K_{\delta}$ does not
exceed a multi-$\log(p)$ term as $p \goto\infty$; see Lemma B.1 in \citet{CASEsupp}. %

\subsection{Interplay between the graph sparsity and signal sparsity}
\label{subsec:LFbenefit}
With these being said, it remains unclear how the sparsity of ${\cal G}^*$
helps in variable selection. In fact, even when ${\cal G}^*$ is $2$-sparse,
it is possible that a node $k$ is connected---through possible long
paths---to many
other nodes; it is unclear how to remove the effect of these nodes
when we try to estimate $\beta_k$.

Somewhat surprisingly, the answer lies in an interesting interplay
between the signal sparsity
and graph sparsity.
To see this point, let $S = S(\beta)$ be the support of $\beta$, and
let ${\cal G}_S^*$ be the subgraph of ${\cal G}^*$ formed by the nodes
in $S$ only.
Given the sparsity of $\cG^*$, if the signal vector $\beta$ is also
sparse, then it is likely
that the sizes of all components of ${\cal G}_S^*$ (a component of a graph
is a maximal connected subgraph) are uniformly small. This is justified
in the following lemma which is proved in \citet{JZ11}.
%
\begin{lemma} \label{lemma:split}
Suppose ${\cal G}^*$ is $K$-sparse, and the support $S = S(\beta)$ is a
realization from $\beta_j\stackrel{\mathrm{i.i.d.}}{\sim}(1-\varepsilon) \nu_0 +
\varepsilon\pi$, where $\nu_0$ is the point mass at $0$ and $\pi$ is any
distribution with support $\subseteq\mathbb{R} \setminus\{0\}$. With
a probability (from randomness of $S$) at least $1 - p (e \eps K)^{m
+1}$, ${\cal G}_S^*$ decomposes into many components with size no larger
than $m$.
\end{lemma}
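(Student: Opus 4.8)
We must show that if $\cg^*$ is $K$-sparse and the support $S$ arises from each coordinate being nonzero independently with probability $\eps$, then with probability at least $1 - p(e\eps K)^{m+1}$ every connected component of the induced subgraph $\cg_S^*$ has size at most $m$.

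The plan is to bound the probability of the complementary (bad) event, namely that some component of $\cg_S^*$ has size $\geq m+1$. The key observation is that a component of size $\geq m+1$ must contain a connected subgraph of $\cg^*$ on exactly $m+1$ nodes, all of which lie in $S$. So it suffices to control the expected number of connected size-$(m+1)$ subgraphs of $\cg^*$ whose every vertex is selected into $S$, and then apply a first-moment (union) bound. First I would fix a node $v$ and count the connected subgraphs of $\cg^*$ that contain $v$ and have exactly $m+1$ vertices. Because $\cg^*$ is $K$-sparse, a standard combinatorial fact about trees/connected subgraphs in bounded-degree graphs gives that the number of connected vertex sets of size $m+1$ containing a fixed vertex is at most $(eK)^{m}$ (one grows a spanning tree by attaching at most $K$ new neighbors at each of $m$ steps, and the factor $e$ absorbs the overcounting from the Cayley-type / Catalan-type enumeration). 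Summing over the $p$ choices of an anchor vertex, the total number of connected size-$(m+1)$ subgraphs of $\cg^*$ is at most $p(eK)^{m}$; to avoid overcounting each such subgraph $(m+1)$ times one can anchor at, say, the minimal-index vertex, but for an upper bound the crude count $p(eK)^m$ is enough.

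Next I would attach the probabilistic weight. For any fixed set $T$ of $m+1$ nodes, the event $T\subseteq S$ has probability exactly $\eps^{m+1}$, since the coordinates $\beta_j$ are i.i.d.\ with $\mathbb{P}(j\in S)=\eps$. By the union bound, the probability that at least one connected size-$(m+1)$ subgraph of $\cg^*$ is entirely contained in $S$ is at most (number of such subgraphs) $\times\,\eps^{m+1} \leq p(eK)^m\,\eps^{m+1}$. Writing $(eK)^m\eps^{m+1} = \frac{1}{eK}\,(e\eps K)^{m+1}$, and since $eK\geq 1$, this is bounded by $p(e\eps K)^{m+1}$. On the complement of this bad event, no component can have size exceeding $m$, which is exactly the claim.

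The main obstacle, and the step that deserves care, is the combinatorial enumeration of connected size-$(m+1)$ subgraphs in a $K$-sparse graph: getting the bound in the clean form $(eK)^m$ (rather than a weaker $K^{m}m!$ or similar) requires the right tree-growth argument and an honest accounting of the $e$ factor. One clean route is to bound the number of rooted labeled trees on $m+1$ vertices embeddable in a degree-$K$ graph by a greedy breadth-first construction and invoke the inequality $\binom{(m+1)K}{m}\leq (eK)^{m}\cdot(\text{const})$, or simply to cite the corresponding counting lemma from \cite{JZ11}, since the lemma is attributed there. Everything else — the i.i.d.\ selection probability and the union bound — is routine, so I would keep those terse and spend the exposition on making the subgraph count and the algebraic simplification to $p(e\eps K)^{m+1}$ explicit.
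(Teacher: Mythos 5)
Your proof is correct and is essentially the argument behind the paper's (cited) proof in \cite{JZ11}: a component of size $\geq m+1$ forces a connected $(m+1)$-subgraph fully inside $S$, the Frieze--Molloy-type bound gives at most $(eK)^m$ such subgraphs per anchor vertex, and the i.i.d.\ Bernoulli($\eps$) selection plus a union bound yields a failure probability at most $p(eK)^m\eps^{m+1}\leq p(e\eps K)^{m+1}$. Nothing further is needed.
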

In this paper, we are primarily interested in cases where for large
$p$, $\eps\leq p^{-\vartheta}$ for some
parameter $\vartheta\in(0, 1)$ and $K$ is bounded by a multi-$\log
(p)$ term. In such cases, the decomposability of ${\cal G}_S^*$ holds
for a
finite $m$, with overwhelming probability.

Lemma~\ref{lemma:split} delineates an interesting picture: The set of
signals decomposes into many small-size isolated ``signal islands'' (if
only we know where), each of them is a component of ${\cal G}_S^*$ and
different ones are disconnected in the GOSD. As a result, the original
$p$-dimensional problem can be viewed as the aggregation of many
separated small-size subproblems that can be solved parallelly. This is
the key insight of this paper.

Note that the decomposability of ${\cal G}_S^*$ attributes to the interplay
between the signal sparsity and the graph sparsity, where the latter
attributes to the use of linear filtering. The decomposability is not
tied to the specific model of $\beta$ in Lemma~\ref{lemma:split}, and
holds for much broader situations (e.g., when $b$ is generated by a
sparse Ising model [\citet{Ising}]).

\subsection{Information leakage and patching} \label{subsec:patch}
While it largely facilitates the decomposability of the model, we must
note that the linear filtering also induces a so-called problem of
\textit{information leakage}. In this section, we discuss how linear
filtering causes such a problem and how to overcome it by the so-called
technique of \textit{patching}.

The following notation is frequently used in this paper.
%
\begin{definition}
For $\call\subset\{1, 2,\ldots, p\}$, $\callJ\subset\{1,\ldots,
N\}
$ and a $p \times N$ matrix~$X$, $X^{\call}$ denotes the $|\call
|\times
N$ sub-matrix formed by restricting the rows of $X$ to~$\call$, and
$X^{\call, \callJ}$ denotes the $|\call|\times|\callJ|$ sub-matrix
formed by restricting the rows of $X$ to $\call$ and columns to
$\callJ$.
\end{definition}

Note that when $N = 1$, $X$ is a $p \times1$ vector, and $X^{\call}$
is an $|\call| \times1$ vector.

To explain information leakage, we first consider an idealized case
where each row of $G$ has $\leq K$ nonzeros. In this case, there is no
need for linear filtering, so $B = H = G$ and $d = \ty$. Recall that
${\cal G}^*_S$ consists of many signal islands, and let $\call$ be one of
them. It is seen that
%
\begin{equation}
\label{addsubmodel} d^{\call} \approx N \bigl(G^{\call,\call}
\beta^{\call}, G^{\call,\call}\bigr),
\end{equation}
and how well we can estimate $\beta^{\call}$ is captured by the Fisher
information matrix $G^{\call, \call}$ [\citet{TPE}].

Come back to the case where $G$ is nonsparse. Interestingly, despite
the strong correlations, $G^{\call, \call}$ continues to be the Fisher
information for estimating $\beta^{\call}$. However, when $G$ is
nonsparse, we must use a linear filtering $D = D_{h, \eta}$ as
suggested, and we have
%
\begin{equation}
\label{dsubmodel} d^{\call} \approx N \bigl(B^{\call,\call}
\beta^{\call}, H^{\call,\call}\bigr).
\end{equation}
Moreover, letting $\callJ= \{1 \leq j \leq p\dvtx \mbox{$D(i,j) \neq0$
for some $i \in\call$} \}$,
it follows that
\[
B^{\call,\call}\beta^{\call} = D^{\call, \callJ} G^{\callJ,
\call} \beta
^{\call}.
\]
By the definition of $D$, $|\callJ| > |\call|$ and the dimension of the
following null space $\geq1$,
%
\begin{equation}
\label{esubmodel} \operatorname{Null}(\call, \callJ) = \bigl\{ \xi\in\mathbb{R}^{|\callJ|}
\dvtx D^{\call,
\callJ} \xi= 0 \bigr\}.
\end{equation}
Compare \eqref{dsubmodel} with \eqref{addsubmodel}, and imagine
the oracle situation where we are told the mean vector of $d^{\call}$
in both. The difference is that we can fully recover
$\beta^{\call}$ using~(\ref{addsubmodel}), but are not able to do so
with only (\ref{dsubmodel}).
In other words, the information containing $\beta^{\call}$ is partially
lost in (\ref{dsubmodel}): if we estimate $\beta^{\call}$ with (\ref
{dsubmodel}) alone, we will never
achieve the desired accuracy.

The argument is validated in Lemma~\ref{lem:FIM} below, where
the Fisher information associated with (\ref{dsubmodel}) is strictly
``smaller'' than $G^{\call, \call}$; the difference between two matrices
can be derived by taking $\call^+ = \call$ and ${\cal J}^+ = \callJ$ in
(\ref{FIM2}).
We call this phenomenon ``information leakage.''

To mitigate this, we expand the information content by including data
in the neighborhood of $\call$. This process is called ``patching.''
Let $\call^+$ be an extension of $\call$ by adding a few neighboring
nodes, and define similarly $\callJ^+=\{1\leq j\leq p\dvtx D(i,j)\neq0
\mbox{ for some } i\in\call^+\}$ and $\operatorname{Null}(\call^+, \callJ^+)$.
Assuming that there is no edge between any node in $\call^+$ and any
node in ${\cal G}_S^* \setminus\call$,
%
\begin{equation}
\label{dplusmodel} d^{\call^+} \approx N\bigl( B^{\call^+, \call}
\beta^{\call}, H^{\call^+,
\call^+}\bigr).
\end{equation}
The Fisher information matrix for $\beta^{\call}$ under model \eqref
{dplusmodel} is larger than that of~\eqref{dsubmodel}, which is
captured in the following lemma.
%
\begin{lemma} \label{lem:FIM}
The Fisher information matrix associated with model \eqref{dplusmodel} is
%
\begin{equation}
\label{FIM2} G^{\call, \call} - \bigl[ U\bigl(U'
\bigl(G^{\callJ^+, \callJ^+}\bigr)^{-1} U \bigr)^{-1}
U' \bigr]^{\call,\call},
\end{equation}
where\vspace*{1pt} $U$ is any $|\callJ^+| \times(|\callJ^+| - |\call^+|)$ matrix
whose columns form
an orthonormal basis of $\operatorname{Null}(\call^+, \callJ^+)$.
\end{lemma}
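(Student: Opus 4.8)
The plan is to treat \eqref{dplusmodel} as an exact Gaussian location model and compute its Fisher information directly, then collapse the resulting expression into \eqref{FIM2} through a single linear-algebra identity. First I would recall that for a Gaussian model $d^{\call^+}\sim N(M\beta^{\call},\Sigma)$ with known design $M = B^{\call^+,\call}$ and known invertible covariance $\Sigma = H^{\call^+,\call^+}$, the score in $\beta^{\call}$ is $M'\Sigma^{-1}(d^{\call^+}-M\beta^{\call})$, so the Fisher Information Matrix is the textbook quantity $M'\Sigma^{-1}M = (B^{\call^+,\call})'(H^{\call^+,\call^+})^{-1}B^{\call^+,\call}$.

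Next I would unfold $B$ and $H$ using $B=DG$, $H=DGD'$, the band structure of $D=D_{h,\eta}$, and the definition $\callJ^+=\{j:\,D(i,j)\neq 0\text{ for some }i\in\call^+\}$. Writing $\tilde D = D^{\call^+,\callJ^+}$ and $\tilde G = G^{\callJ^+,\callJ^+}$, every row of $D$ indexed by $\call^+$ is supported entirely on the columns $\callJ^+$, which yields the exact factorizations $B^{\call^+,\call}=\tilde D\,G^{\callJ^+,\call}$ and $H^{\call^+,\call^+}=\tilde D\,\tilde G\,\tilde D'$. Since $G=X'X$ is symmetric and $\tilde G$ may be taken positive definite, and since $\tilde D$ has full row rank $|\call^+|$ (its rows are rows of the invertible upper-triangular matrix $D$ restricted to the columns on which they are supported, so dropping the all-zero columns preserves independence), the matrix $\tilde D\tilde G\tilde D'$ is positive definite and the inverse above is well defined. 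Because $\call\subseteq\callJ^+$, the block $G^{\callJ^+,\call}$ is just the $\call$-columns of $\tilde G$; substituting and using symmetry, the Fisher information equals the $\call\times\call$ block of the $|\callJ^+|\times|\callJ^+|$ matrix $\tilde G\,\tilde D'(\tilde D\tilde G\tilde D')^{-1}\tilde D\,\tilde G$.

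It then suffices to prove the identity
\[
\tilde G\,\tilde D'(\tilde D\tilde G\tilde D')^{-1}\tilde D\,\tilde G \;=\; \tilde G - U(U'\tilde G^{-1}U)^{-1}U',
\]
where the columns of $U$ form an orthonormal basis of $Null(\call^+,\callJ^+)=\ker\tilde D$; restricting both sides to the $\call\times\call$ block and using $\tilde G^{\call,\call}=G^{\call,\call}$ then gives \eqref{FIM2} exactly. To establish the identity I would conjugate by $\tilde G^{1/2}$: with $A=\tilde G^{1/2}\tilde D'$ and $V=\tilde G^{-1/2}U$, the two matrices $P=A(A'A)^{-1}A'$ and $Q=V(V'V)^{-1}V'$ are the orthogonal projections onto $\mathrm{col}(A)$ and $\mathrm{col}(V)$, and a direct check shows $P=\tilde G^{1/2}\tilde D'(\tilde D\tilde G\tilde D')^{-1}\tilde D\tilde G^{1/2}$ and $Q=\tilde G^{-1/2}U(U'\tilde G^{-1}U)^{-1}U'\tilde G^{-1/2}$. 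The identity is equivalent to $P+Q=I$, so I only need $\mathrm{col}(A)$ and $\mathrm{col}(V)$ to be orthogonal complements in $\mathbb{R}^{|\callJ^+|}$: orthogonality is immediate since $A'V=\tilde D U=0$, and the dimensions add up because $\mathrm{rank}(\tilde D)=|\call^+|$ and $\mathrm{rank}(U)=|\callJ^+|-|\call^+|$. Conjugating $P+Q=I$ by $\tilde G^{1/2}$ recovers the displayed identity.

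The first two paragraphs are routine once the band structure justifies the factorizations; the one genuine step is the projection identity, and the point to get right there is the \emph{bookkeeping of the two metrics}. The range that determines the observed information lives in the ordinary inner product after the $\tilde G^{1/2}$-conjugation, whereas $U$ enters through $\tilde G^{-1}$, so the orthogonal-complement argument must be run in the transformed coordinates, where $\tilde D U=0$ is exactly what makes the two projections complementary and thereby identifies the leaked information with $[\,U(U'\tilde G^{-1}U)^{-1}U'\,]^{\call,\call}$.
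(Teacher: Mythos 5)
Your proposal is correct and follows essentially the same route as the paper's own proof: both reduce the Fisher information $(B^{\call^+,\call})'(H^{\call^+,\call^+})^{-1}B^{\call^+,\call}$ to the $\call\times\call$ block of $G_1D_1'(D_1G_1D_1')^{-1}D_1G_1$ via the factorizations $B^{\call^+,\callJ^+}=D_1G_1$, $H^{\call^+,\call^+}=D_1G_1D_1'$, and then conjugate by $G_1^{1/2}$ to identify the residual term with the projection built from $V=G_1^{-1/2}U$. Your complementary-projections formulation ($P+Q=I$ via orthogonality $\tilde{D}U=0$ plus dimension counting) is just a repackaging of the paper's step showing that $I-P_R$ projects onto $\mathcal{N}(R)=\{G_1^{-1/2}\xi:\ \xi\in Null(\call^+,\callJ^+)\}$, so the two arguments coincide in substance.
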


When the size of $\call^+$ becomes appropriately large,
the second matrix in (\ref{FIM2}) is small element-wise (and so is
negligible) under mild conditions [see details in Lemma~A.3 in
\citet{CASEsupp}]. This matrix is usually nonnegligible if we set $\call^+=\call$
and $\callJ^+ = \callJ$ (i.e., without patching).

\begin{example}\label{ex1}
We illustrate the above phenomenon with an example
where $p = 5000$, $G$ is the matrix satisfying
$G(i,j) = [1 +5 |i - j|]^{-0.95}$ for all $1 \leq i, j \leq p$ and
$D = D_{h, \eta}$ with $h = 1$ and $\eta= (1,-1)'$.
If $\call= \{2000\}$, then $G^{\call, \call} = 1$, but the Fisher
information associated with model (\ref{dsubmodel}) is $0.5$. The gap
can be substantially narrowed if we patch with $\call^+ = \{1990, 1991,\ldots,\break 2010\}$, in which case the Fisher information in
model (\ref{dplusmodel}) is $0.904$.
\end{example}

Although one of the major effects of information leakage is a reduction
in the signal-to-noise ratio, this phenomenon is very different from
the well-known ``signal cancellation'' or ``partial faithfulness'' in
variable selection. ``Signal cancellation'' is caused by correlations
between signal covariates, and CASE overcomes this problem by using
multivariate screening. However, ``information leakage'' is caused by
the use of a linear filtering. From Lemma~\ref{lem:FIM}, we can see that the
information leakage appears no matter for what signal vector $\beta$.
CASE overcomes this problem by the patching technique.

\subsection{Covariate assisted screening and estimation (CASE)}
\label{subsec:ags}
In summary, we start from the post-filtering regression model
\[
d = D\ty\qquad \mbox{where $\ty= X'Y$ and $D = D_{h, \eta}$ is a
linear filter}.
\]
We have observed the following:
\begin{itemize}
\item\textit{Signal decomposability}. Linear filtering induces
sparsity in GOSD, a graph
constructed from the Gram matrix $G$. In this graph,
the set of all true signals decomposes into many small-size
signal islands, each signal island is a component of GOSD.
\item\textit{Information patching}. Linear filtering also causes
information leakage, which can be overcome
by delicate patching technique.
\end{itemize}
Naturally, these motivate a two-stage screen and clean approach for
variable selection, which we call \textit{covariate assisted screening
and estimation} (CASE). CASE contains a \textit{patching and
screening} (PS) step and a \textit{patching and estimation} (PE) step.
\begin{itemize}
\item\textit{$\mathit{PS}$-step}. We use sequential $\chi^2$-tests to identify
candidates
for each signal island. Each $\chi^2$-test is guided by ${\cal G}^*$, and
aided by a carefully designed patching step. This achieves multivariate
screening without visiting all submodels.
\item\textit{$\mathit{PE}$-step}. We re-investigate each candidate with
penalized MLE and
certain patching technique, in hopes of removing false positives.
\end{itemize}


For the purpose of patching,
the $\mathit{PS}$-step and the $\mathit{PE}$-step use tuning integers $\ell^{ps}$ and
$\ell^{pe}$, respectively. The following notation is frequently used in
this paper.
%
\begin{definition}\label{DefinePatchingSet}
For any index $1 \leq i \leq p$, $\{i\}^{ps} = \{1 \leq j \leq p\dvtx |j -
i| \leq\ell^{ps} \}$. For any
subset $\call$ of $\{1, 2,\ldots, p\}$, $\call^{ps} = \bigcup_{i \in
\call
} \{i\}^{ps}$. Similar notation applies to $\{i\}^{pe}$ and $\call^{pe}$.
\end{definition}

We now discuss two steps in detail. Consider the $\mathit{PS}$-step first.
Fix $m > 1$. Suppose that ${\cal G}^*$ has a total of $T$ connected
subgraphs with size $\leq m$, which we denote by $\{{\cal G}_t\}_{t =
1}^{T}$, arranged in the ascending order of the sizes, with ties
breaking lexicographically.

\renewcommand{\theexample}{\arabic{example}\normalfont{(a)}}
\begin{example}\label{ex2a}
We illustrate this with a toy example, where $p
= 10$ and the GOSD is displayed in Figure~\ref{Fig1}(a). For $m = 3$,
GOSD has $T = 30$ connected subgraphs, which we arrange as follows.
Note that $\{{\cal G}_t\}_{t = 1}^{10}$ are singletons, $\{ {\cal G}_t
\}_{t =
11}^{20}$ are connected pairs and $\{{\cal G}_t\}_{t = 21}^{30}$ are
connected triplets
\begin{eqnarray*}
&& \{1\}, \{2\}, \{3\}, \{4\}, \{5\}, \{6\}, \{7\}, \{ 8\}, \{9\}, \{10\},
\\
&& \{1,2\}, \{1,7\}, \{2,4\}, \{3,4\}, \{4,5\}, \{ 5,6\} , \{7,8\}, \{8,9\},
\{8,10\}, \{9,10\},
\\
&& \{1,2,4\}, \{1,2,7\}, \{1,7,8\}, \{2,3,4\}, \{ 2,4,5\}, \{3,4,5\}, \{4,5,6\},
\{7,8,9\},
\\
&& \{7,8,10\}, \{8,9,10\}.
\end{eqnarray*}

Here we examine sequentially only the 30 submodels above to decide
whether any variables have additional utilities given the variables
recruited before, via $\chi^2$-tests. The first 10 screening problems
are just the univariate screening. After that, starting from bivariate
screening, we examine the variables given those selected so far.
Suppose that we are examining the submodel $\{1, 2\}$. The testing
problem depends on how the variables $\{1, 2\}$ are selected in the
previous steps. For example, if the variables $\{1,2, 4, 6\}$ have
already been selected in the univariate screening, there is no new
recruitment, and we move on to examine the submodel $\{1, 7\}$. If the
variables $\{1, 4, 6\}$ have been recruited so far, we need to test if
variable $\{2\}$ has additional contributions given variable $\{1\}$.
If the variables $\{4, 6\}$ have been recruited in the previous steps,
we will examine whether variables $\{1, 2\}$ together have any
significant contributions. Therefore, we have never run regression for
more than two variables. Similarly, for trivariate screening, we will
never run regression for more than 3 variables. Clearly, multivariate
screening improves the marginal screening in that it gives signal
variables chances to be recruited if they are wrongly excluded by the
marginal method.
\end{example}

We now formally describe the procedure. The $\mathit{PS}$-step contains $T$
sub-stages, where we screen
${\cal G}_t$ sequentially, $t = 1, 2,\ldots, T$. Let
${\cal U}^{(t)}$ be the set of retained indices at the end of stage
$t$, with ${\cal U}^{(0)} = \varnothing$ as the convention. For $1 \leq t
\leq T$, the $t$th sub-stage contains two sub-steps:
\begin{itemize}
\item(\textit{Initial step}). Let $\hat{N} = {\cal U}^{(t-1)} \cap
{\cal G}_{t}$
represent the set of nodes in ${\cal G}_t$ that have already been accepted
by the end of the $(t-1)$th sub-stage, and let $\hat{F} = {\cal G}_t
\setminus\hat{N}$ be the set of other nodes in ${\cal G}_t$.
\item(\textit{Updating step}). Write for short $\call= {\cal G}_t$. Fixing
a tuning parameter $\ell^{ps}$ for patching, introduce
%
\begin{eqnarray}
\label{def of W and Q(ps)} W& =&\bigl(B^{\call^{ps},\call}\bigr)'
\bigl(H^{\call^{ps},\call^{ps}}\bigr)^{-1} d^{\call^{ps}},
\nonumber
\\[-8pt]
\\[-8pt]
\nonumber
Q &=&
\bigl(B^{\call^{ps}, \call}\bigr)'\bigl(H^{\call^{ps},\call^{ps}}
\bigr)^{-1} \bigl(B^{\call
^{ps},\call}\bigr),
\end{eqnarray}

\begin{figure}

\includegraphics{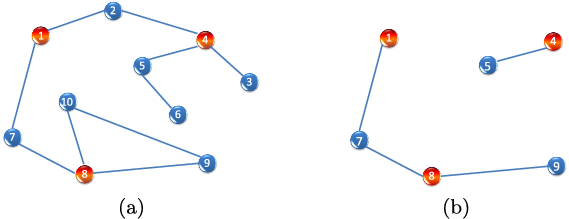}

\caption{Illustration of graph of strong dependence (GOSD). Red: signal
nodes. Blue: noise nodes.
\textup{(a)} GOSD with 10 nodes.
\textup{(b)} Nodes of GOSD that survived the $\mathit{PS}$-step.
} \label{Fig1}
\end{figure}
%
%

\noindent where $W$ is a random vector and $Q$ can be thought of as the
covariance matrix of $W$.
Define $W_{\hat{N}}$, a subvector of $W$, and $Q_{\hat{N},\hat{N}}$, a
submatrix of~$Q$, as follows:
%
\begin{eqnarray}
\label{WNadd} W_{\hat{N}}&= &\bigl(B^{\call^{ps},\hat{N}}\bigr)'
\bigl(H^{\call^{ps},\call^{ps}}\bigr)^{-1} d^{\call^{ps}},
\nonumber
\\[-8pt]
\\[-8pt]
\nonumber
 Q_{\hat{N},\hat{N}}&=&
\bigl(B^{\call^{ps}, \hat{N}}\bigr)'\bigl(H^{\call
^{ps},\call
^{ps}}
\bigr)^{-1} \bigl(B^{\call^{ps},\hat{N}}\bigr).
\end{eqnarray}
Introduce the test statistic
%
\begin{equation}
\label{def of T statistics} T(d, \hat{F}, \hat{N}) = W' Q^{-1}W -
W_{\hat{N}}'(Q_{\hat{N},\hat
{N}})^{-1}
W_{\hat{N}}.
\end{equation}
For a threshold $t = t(\hat{F}, \hat{N})$ to be determined, we update
the set of retained nodes by ${\cal U}^{(t)} = {\cal U}^{(t-1)} \cup
\hat{F}$ if $T(d, \hat{F}, \hat{N}) > t$, and let ${\cal U}^{(t)} =
{\cal U}^{(t-1)}$ otherwise. In other words, we accept nodes in $\hat
{F}$ only when they have additional utilities.
\end{itemize}
The $\mathit{PS}$-step terminates at $t = T$. We then write ${\cal U}_p^* =
{\cal U}^{(T)}$ so that
\[
{\cal U}_p^* = \mbox{the set of all retained indices at the end of
the $\mathit{PS}$-step}.
\]

In the $\mathit{PS}$-step, as we screen, we accept nodes sequentially. Once a
node is accepted in the $\mathit{PS}$-step,
it stays there until the end of the $\mathit{PS}$-step; of course, this node
could be killed in the $\mathit{PE}$-step. In spirit, this is similar to the well-known
forward regression method, but the implementation of two methods are
significantly different.

The $\mathit{PS}$-step uses a collection of tuning thresholds
\[
\tq= \bigl\{ t(\hat{F}, \hat{N})\dvtx \mbox{$(\hat{F}, \hat{N})$ are defined
above} \bigr\}.
\]
A convenient choice for these thresholds is to let
$t(\hat{F}, \hat{N}) = 2 \tilde{q} \log(p) |\hat{F}|$ for a properly
small fixed constant $\tilde{q} > 0$. See Section~\ref{subsec:main}
(and also Sections~\ref{subsec:Toeplitz}--\ref{subsec:chang}) for more
discussion on the choices of $t(\hat{F}, \hat{N})$.


In the $\mathit{PS}$-step, we use $\chi^2$-test for screening. This is the best
choice when the coordinates of $z$ are Gaussian and have the same
variance. When the Gaussian assumption on $z$ is questionable, we must
note that the $\chi^2$-test depends on the Gaussianity of $a'z$ for all
$p$-different $a$, not on that of $z$;
$a'z$ could be approximately Gaussian by central limit theorem.
Therefore, the performance of $\chi^2$-test is relatively robust to
non-Gaussianity. If circumstances arise that the $\chi^2$-test is not
appropriate (e.g., misspecification of the model, low quantity of the
data), we may need an alternative, say, some nonparametric tests. In
this case, if the efficiency of the test is nearly optimal, then the
screening in the $\mathit{PS}$-step would continue to be successful.

How does the $\mathit{PS}$-step help in variable selection? In Section A in \citet{CASEsupp}, %
we show that in a broad context, provided that the tuning parameters
$t(\hat{F}, \hat{N})$ are properly set,
the $\mathit{PS}$-step has two noteworthy properties: the \textit{sure
screening} (SS) property and the \textit{separable after
screening} (SAS) property. The SS property says that $\cU_p^*$ contains
all but a negligible
fraction of the true signals. The SAS property says that if we view
${\cal U}_p^*$ as a subgraph of
${\cal G}^*$ (more precisely, as a subgraph of ${\cal G}^+$, an
expanded graph of
${\cal G}^*$ to be introduce below),
then this subgraph decomposes into many disconnected components, each
having a moderate size.

Together, the SS property and the SAS property enable us to reduce the
original large-scale problem to many parallel small-size
regression problems, and pave the way for the $\mathit{PE}$-step.
See Section A in \citet{CASEsupp}
for details.

\renewcommand{\theexample}{\arabic{example}\normalfont{(b)}}
\setcounter{example}{1}
\begin{example}
 We illustrate the above points with the toy
example in Example~\ref{ex2a}. Suppose after the $\mathit{PS}$-step, the set of
retained indices ${\cal U}_p^*$ is $\{1, 4, 5, 7, 8, 9\}$; see
Figure~\ref{Fig1}(b). In this example, we have a total of three signal
nodes, $\{1\}$, $\{4\}$ and $\{8\}$, which
are all retained in ${\cal U}_p^*$ and so the $\mathit{PS}$-step yields sure
screening. On the other hand, ${\cal U}_p^*$ contains
a few nodes of false positives, which will be further cleaned in the
$\mathit{PE}$-step. At the same time,
viewing it as a subgraph of ${\cal G}^*$, ${\cal U}_p^*$ decomposes
into two
disconnected components, $\{1, 7, 8, 9\}$ and $\{4, 5\}$;
compare Figure~\ref{Fig1}(a). The SS property and the SAS property
enable us to reduce the original problem of $10$ nodes to two parallel
regression problems,
one with $4$ nodes, and the other with $2$ nodes.
\end{example}

We now discuss the $\mathit{PE}$-step. Recall that $\ell^{pe}$ is the tuning
parameter for the
patching of the $\mathit{PE}$-step, and let $\{i\}^{pe}$ be as in Definition~\ref
{DefinePatchingSet}. The following graph
can be viewed as an expanded graph of ${\cal G}^*$.

%
\begin{definition}\label{DefineGplus}
Let ${\cal G}^+ = (V, E)$ be the graph where $V = \{1, 2,\ldots, p\}$ and
there is an edge between nodes $i$ and $j$ when there exist nodes $k
\in\{i\}^{pe}$ and $k' \in\{j\}^{pe}$ such that there is an edge
between $k$ and $k'$ in ${\cal G}^*$.
\end{definition}

Recall that ${\cal U}_p^*$ is the set of retained indices at the end of
the $\mathit{PS}$-step.
%
\begin{definition}\label{def:subgraphnotation}
Fix a graph ${\cal G}$ and its subgraph $\call$. We say $\call\unlhd
{\cal G}$
if $\call$ is a connected subgraph of ${\cal G}$, and $\call\lhd
{\cal G}$ if
$\call$ is a component (maximal connected subgraph) of ${\cal G}$.
\end{definition}
Fix $1 \leq j \leq p$. When $j \notin{\cal U}_p^*$, CASE estimates
$\beta_j$ as $0$. When $j \in{\cal U}_p^*$, viewing ${\cal U}_p^*$ as
a subgraph of
${\cal G}^+$, there is a unique subgraph $\call$ such that
$j \in\call\lhd{\cal U}_p^*$.
Fix two tuning parameters $u^{pe}$ and $v^{pe}$. We estimate $\beta
^{\call}$ by minimizing
%
\begin{eqnarray}
\label{obj of minimization}&& \min_\theta \biggl\{ \frac{1}{2}
\bigl(d^{\call^{pe}}-B^{\call
^{pe},\call
}\theta\bigr)'
\bigl(H^{\call^{pe},\call^{pe}}\bigr)^{-1}\bigl(d^{\call^{pe}}-B^{\call
^{pe},\call}
\theta\bigr)
\nonumber
\\[-8pt]
\\[-8pt]
\nonumber
&&\hspace*{180pt}{}+\frac{(u^{pe})^2}{2}\Vert\theta\Vert_0 \biggr\},
\end{eqnarray}
subject to that $\theta$ is an $|\call| \times1$ vector each of which
nonzero coordinate $\geq v^{pe}$, where $\|\theta\|_0$ denotes the
$L^0$-norm of $\theta$.
Putting these together gives the final estimator of CASE $\hat{\beta
}^{\mathrm{case}} = \hat{\beta}^{\mathrm{case}}(Y; \delta, m, \tq, \ell^{ps}, \ell^{pe},
u^{pe}, v^{pe}, D_{h, \eta}, X, p)$.

CASE uses tuning parameters ($\delta, m, \tq, \ell^{ps}, \ell^{pe},
u^{pe}, v^{pe}$). Earlier in this paper, we have briefly discussed how
to choose $(\delta, \tq)$. As for $m$, usually, a choice of $m = 2$ or
$3$ is sufficient unless the signals are relatively ``dense.'' The
choices of $(\ell^{ps}, \ell^{pe}, u^{pe}, v^{pe})$ are addressed in
Section~\ref{subsec:main}; see also Sections~\ref{subsec:Toeplitz}--\ref
{subsec:chang}.



\subsection{Computational complexity of CASE, comparison with
multivariate screening}
\label{subsec:compu}
The $\mathit{PS}$-step is closely related to the well-known method of
marginal screening and has a moderate computational complexity.

Marginal screening selects variables by thresholding the vector $d$
coordinate-wise.
The method is computationally fast, but it
neglects ``local'' graphical structures, and is thus ineffective.
For this reason, in many challenging problems, it is desirable to use
\textit{multivariate screening} methods which adapt to ``local''
graphical structures.

Fix $m > 1$. An $m$-variate $\chi^2$-screening procedure is one of such
desired methods.
The method screens all $k$-tuples of coordinates of $d$ using $\chi
^2$-tests, for all $k \leq m$, in an exhaustive (brute-force) fashion.
Seemingly, the method adapts to ``local'' graphical structures and
could be much more effective than marginal screening. However, such a
procedure has a computational cost of $O(p^m)$ [excluding the
computational cost for obtaining $X'Y$ from $(X, Y)$; same below] which
is usually not affordable when $p$ is large.

The main innovation of the $\mathit{PS}$-step is to use a graph-assisted
$m$-variate $\chi^2$-screening, which is both effective in variable
selection and efficient in computation.
In fact, the $\mathit{PS}$-step only screens $k$-tuples of coordinates of $d$ that
form a connected subgraph of ${\cal G}^*$, for all $k \leq m$.
Therefore, if ${\cal G}^*$ is $K$-sparse, then there are $\leq C p (e
K)^{m+1}$ connected
subgraphs of ${\cal G}^*$ with size $\leq m$; so if $K = K_p$ is no greater
than a
multi-$\log(p)$ term (see Definition~\ref{DefineLp}), then the
computational complexity of the $\mathit{PS}$-step is
only $O(p)$, up to a multi-$\log(p)$ term.

\renewcommand{\theexample}{\arabic{example}\normalfont{(c)}}
\setcounter{example}{1}
\begin{example}
We illustrate the difference between the above
three methods with the toy example in Example~\ref{ex2a}, where $p = 10$ and
the GOSD is displayed in Figure~\ref{Fig1}(a).
Suppose we choose $m = 3$. Marginal screening screens all $10$ single
nodes of the GOSD.
The brute-force $m$-variate screening screens all $k$-tuples of
indices, $1 \leq k \leq m$, with a total of ${p\choose1} + \cdots+
{p\choose m} = 175$ such $k$-tuples.
The $m$-variate screening in the $\mathit{PS}$-step only screens $k$-tuples that
are connected subgraphs of ${\cal G}^*$, for $1\leq k\leq m$, and in this
example, we only have $30$ such connected subgraphs.
\end{example}

The computational complexity of the $\mathit{PE}$-step consists two parts. The
first part is the complexity of obtaining all components of $\cU_p^*$,
which is $O(pK)$ and where $K$ is the maximum degree of ${\cal G}^+$; note
that for settings considered in this paper, $K=K_p^+$ does not exceed a
multi-$\log(p)$ term
[see Lemma B.2 in \citet{CASEsupp}]. 
The second part of the complexity comes from solving (\ref{obj of
minimization}), which
hinges on the maximal size of $\call$. In Lemma A.2 in \citet{CASEsupp}, 
we show that in a broad
context, the maximal size of $\call$ does not exceed a constant~$l_0$,
provided the thresholds $\tq$ are properly set. Numerical studies in
Section~\ref{sec:Simul} also support this point.
Therefore, the complexity in this part does not exceed $p\cdot3^{l_0}$.
As a result, the computational complexity of the $\mathit{PE}$-step is moderate.
Here, the bound $O(pK + p\cdot3^{l_0})$ is conservative; the actual
computational complexity is much smaller than this.

How does CASE perform? In Sections~\ref{subsec:minimax}--\ref{subsec:main},
we set up an asymptotic framework and show that CASE is
asymptotically minimax in terms of the Hamming distance over a wide
class of situations.
In Sections~\ref{subsec:Toeplitz}--\ref{subsec:chang}, we
apply CASE to the long-memory time series and the change-point model,
and elaborate the optimality of CASE in such models with
the so-called \textit{phase diagram}.

\subsection{Asymptotic rare and weak model} \label{subsec:minimax}
In this section, we add an asymptotic framework to the rare and weak
signal model $RW(\eps, \tau, \mu)$ introduced in Section~\ref{subsec:RW}. We use $p$ as the driving asymptotic parameter and tie
$(\eps, \tau)$
to $p$ through some fixed parameters.

In particular, we fix $\vartheta\in(0,1)$ and model the sparse
parameter $\eps$ by
%
\begin{equation}
\label{eps} \eps= \eps_p = p^{-\vartheta}.
\end{equation}
Note that as $p$ grows, the signal becomes increasingly sparse.
It turns out that the most interesting range of signal strength is
$\tau= O(\sqrt{\log(p)})$; see, for example, \citet{UPS}. For much
smaller $\tau$, successful recovery is impossible. For much
larger $\tau$, the problem is relatively easy. The critical value of
$\tau$ depends on $\vartheta$ in a complicate way. In light of this, we
fix $r>0$, and let
%
\begin{equation}
\label{tau} \tau= \tau_p = \sqrt{ 2r\log(p)}.
\end{equation}

At the same time, recalling that in $RW(\eps, \tau, \mu)$, we require
$\mu\in\Theta_p(\tau)$ so that $|\mu_i| \geq\tau$ for all $1
\leq i
\leq p$. Fixing $a > 1$, we now further restrict $\mu$ to the following
subset of $\Theta_p(\tau)$:
%
\begin{equation}
\label{muclass} \Theta_p^*(\tau_p, a)= \bigl\{ \mu\in
\Theta_p(\tau_p)\dvtx \tau_p\leq |\mu
_i|\leq a\tau_p, 1 \leq i \leq p \bigr\}.
\end{equation}

%
\begin{definition}
We call \eqref{eps}--\eqref{muclass} the asymptotic rare and weak model
$\mathit{ARW}(\vartheta, r, a, \mu)$.
\end{definition}

Requiring the strength of each signal $\leq a \tau_p$ is mainly for
technical reasons, and hopefully, such a constraint can be removed in
the near future.
From a practical point of view, since usually we do not have sufficient
information on $\mu$, we prefer to have a larger $a$: we hope that when
$a$ is properly large, $\Theta_p^*(\tau_p, a)$ is broad enough, so that
neither the optimal procedure nor the minimax risk
needs to adapt to $a$.

Toward this end, we impose some mild regularity conditions on $a$ and
the Gram matrix $G$. Let $g$ be the smallest integer such that
%
\begin{equation}
\label{Defineg} g \geq\max\bigl\{ (\vartheta+ r)^2/(2 \vartheta r), m
\bigr\}.
\end{equation}
For any $p \times p$ Gram matrix $G$ and $1 \leq k \leq p$, let
$\lambda
_k^*(G)$ be the minimum of the smallest eigenvalues of all $k\times k$
principle sub-matrices of $G$.
Introduce
%
\begin{equation}
\label{matrixclass2} \widetilde{{\cal M}}_p(c_0, g) = \bigl\{
\mbox{$G$ is a $p \times p$ Gram matrix, $\lambda_k^*(G) \geq
c_0$, $1 \leq k \leq g$}\bigr\}.
\end{equation}

For any two subsets $V_0$ and $V_1$ of $\{1, 2,\ldots, p\}$, consider
the optimization problem
\begin{eqnarray*}
&&\bigl(\theta^{(0)}_*(V_0, V_1; G),
\theta_*^{(1)}(V_0, V_1; G) \bigr) \\
&&\qquad= \margmin
\bigl\{ \bigl(\theta^{(1)} - \theta^{(0)}\bigr)' G
\bigl(\theta^{(1)} - \theta ^{(0)}\bigr) \bigr\},
\end{eqnarray*}
up to the constraints that $|\theta_i^{(k)}| \geq\tau_p$ if $i \in
V_k$ and $\theta_i^{(k)} = 0$ otherwise, where $k = 0, 1$, and that in
the special case of $V_0 = V_1$, the sign vectors of
$\theta^{(0)}$ and $\theta^{(1)}$ are unequal. Introduce
\[
a_g^*(G) = \max_{\{(V_0, V_1)\dvtx |V_0 \cup V_1| \leq g\}} \max\bigl\{ \bigl\|
\theta_*^{(0)}(V_0, V_1; G)\bigr\|_{\infty},\bigl \|
\theta_*^{(1)}(V_0, V_1; G) \bigr\|_{\infty}
\bigr\}.
\]
The following lemma is elementary, so we omit the proof.
%
\begin{lemma} \label{lemma:g}
For any $G \in\widetilde{{\cal M}}_p(c_0, g)$,
there is a constant $C = C(c_0, g) > 0$ such that $a_g^*(G) \leq C$.
\end{lemma}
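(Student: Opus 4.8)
The plan is to reduce the optimization to the principal submatrix indexed by $V = V_0 \cup V_1$, use the uniform two‑sided spectral control available there, and then resolve the one genuinely delicate point: since the objective depends on the pair $(\theta^{(0)}, \theta^{(1)})$ only through the difference $\delta = \theta^{(1)} - \theta^{(0)}$, a bound on $\delta$ at the optimum does not by itself bound the two vectors individually. I would handle this by re‑realizing the optimal difference with a minimal‑magnitude representation. The problem is homogeneous of degree one in $\tau_p$ (replacing the threshold $\tau_p$ by $1$ rescales every feasible pair by $1/\tau_p$ and the objective by $\tau_p^{-2}$, leaving the optimizing configuration unchanged), so $a_g^*(G)$ scales linearly with $\tau_p$ and it suffices to prove the bound with $\tau_p = 1$.

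Because both $\theta^{(0)}$ and $\theta^{(1)}$ are supported in $V := V_0 \cup V_1$ with $|V| = k \leq g$, I may replace $G$ by its principal submatrix $G^{V,V}$ and $\delta$ by $\delta^V$. Since $G \in \widetilde{\mathcal{M}}_p(c_0, g)$ and $k \leq g$, we have $\lambda_{\min}(G^{V,V}) \geq c_0$; and because $G(i,i) = 1$, the nonnegative eigenvalues obey $\lambda_{\max}(G^{V,V}) \leq \mathrm{tr}(G^{V,V}) = k \leq g$. Thus $c_0 I \preceq G^{V,V} \preceq g I$. I would then bound the optimal value $M_* := (\delta_*)' G \delta_*$ from above by evaluating the objective at an explicit feasible point: take $\theta_i^{(0)} = 1$ on $V_0$ and $\theta_i^{(1)} = 1$ on $V_1$ (flipping one sign of $\theta^{(1)}$ when $V_0 = V_1$, so the unequal‑sign‑vector constraint is met). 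Then $\|\delta\|_\infty \leq 2$, hence $\|\delta\|_2^2 \leq 4g$ and $M_* \leq g \cdot 4g = 4g^2$. Combined with the lower bound $M_* = (\delta_*^V)' G^{V,V} \delta_*^V \geq c_0 \|\delta_*^V\|_2^2$, this yields $\|\delta_*\|_\infty \leq \|\delta_*\|_2 \leq 2g/\sqrt{c_0}$.

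Finally I would convert this bound on $\delta_*$ into a bound on each $\theta_*^{(k)}$ by exhibiting a feasible pair that realizes the optimal difference with controlled magnitude. On $V_0 \setminus V_1$ (resp. $V_1 \setminus V_0$) the value of $\theta^{(0)}$ (resp. $\theta^{(1)}$) equals $\mp \delta_{*,i}$ and is forced, with magnitude $\leq \|\delta_*\|_\infty$. On $V_0 \cap V_1$ there is a free additive constant; I fix it by pinning one of the two coordinates to magnitude $1$, choosing its sign to realize the ``opposite sign'' case where required (available since unequal sign vectors force $|\delta_{*,i}| \geq 2$ at the offending coordinate), so that the other coordinate has magnitude $\leq 1 + |\delta_{*,i}|$. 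This minimal‑magnitude representative is the one the $\margmin$ implicitly selects. In every case $\|\theta_*^{(k)}\|_\infty \leq 1 + \|\delta_*\|_\infty \leq 1 + 2g/\sqrt{c_0}$, giving $a_g^*(G) \leq C(c_0, g) := 1 + 2g/\sqrt{c_0}$ (with $\tau_p = 1$; in general $\leq C\tau_p$).

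The main obstacle is exactly this last maneuver: recognizing that the objective is flat along the diagonal direction on $V_0 \cap V_1$, so the naive minimizer of the displayed problem is unbounded, and that the correct remedy is to bound the optimal difference $\delta_*$ first via the spectral lower bound and only then realize it economically. Everything else—the two‑sided eigenvalue control, the crude upper bound on $M_*$, and the coordinatewise realization—is routine, which is consistent with the authors' remark that the lemma is elementary.
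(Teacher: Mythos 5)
The paper does not actually prove this lemma---it is stated with ``The following lemma is elementary, so we omit the proof''---so there is no argument of the authors' to compare yours against; the question is only whether your proof is sound, and it is. The reduction to $G^{V,V}$ with $|V| \leq g$, the two-sided control $c_0 I \preceq G^{V,V} \preceq g I$ (the upper bound from unit diagonals plus positive semi-definiteness of a Gram matrix), the test-point bound $M_* \leq 4g^2$, and the resulting bound $\|\delta_*\|_\infty \leq 2g/\sqrt{c_0}$ on the optimal difference are all correct. More importantly, you correctly isolate and fix the one genuine subtlety: the objective depends on $(\theta^{(0)},\theta^{(1)})$ only through $\delta = \theta^{(1)}-\theta^{(0)}$, so whenever $V_0 \cap V_1 \neq \emptyset$ the argmin set is unbounded (shift both vectors by the same constant on a shared coordinate), and the lemma can only be true if read as asserting the existence of a bounded minimizer, i.e., a bounded selection from the argmin. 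Your re-realization---pinning a shared coordinate to magnitude $\tau_p$ with sign chosen to preserve feasibility (possible since $\delta_{*,i} \geq 0$ allows $\theta_i^{(0)} = \tau_p$ and $\delta_{*,i} < 0$ allows $\theta_i^{(0)} = -\tau_p$), and using a coordinate where $|\delta_{*,i}| \geq 2\tau_p$ to enforce the unequal-sign-vector constraint when $V_0 = V_1$---does exactly this, and it yields $a_g^*(G) \leq 1 + 2g/\sqrt{c_0}$ in units of $\tau_p$, which is also the right reading of the definition given that $a_g^*(G)$ is compared with the dimensionless parameter $a$ of $\Theta_p^*(\tau_p, a)$. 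The only point you gloss over is attainment of the minimum (the feasible set is unbounded and partly defined through sign constraints), but since the objective is a positive-definite quadratic in $\delta$ and the set of achievable differences is closed, this is immediate and does not affect the argument.
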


In this paper, except for Section~\ref{subsec:chang} where we discuss
the change-point model, we assume
%
\begin{equation}
\label{Defineag} G \in\widetilde{{\cal M}}(c_0, g),\qquad a >
a^*_g(G).
\end{equation}
Under such conditions, $\Theta_p^*(\tau_p, a)$ is broad enough and the
minimax risk (to be introduced below)
does not depend on $a$. See Section~\ref{subsec:LB} for more discussion.

For any variable selection procedure $\hb$, we measure the performance
by the Hamming distance
\[
h_p(\hat{\beta}; \beta, G)= E \Biggl[ \sum
_{j=1}^p 1 \bigl\{ \operatorname {sgn}(\hat{
\beta}_j)\neq\operatorname{sgn}(\beta_j) \bigr\}
\Big| X, \beta \Biggr],
\]
where the expectation is taken with respect to $\hb$. Here, for any $p
\times1$ vector
$\xi$, $\sgn(\xi)$ denotes the sign vector [for any number $x$,
$\sgn
(x) = 1, 0, -1$ when $x < 0$, $x = 0$, and $x > 0$ correspondingly].

Under $\mathit{ARW}(\vartheta,r, a, \mu)$, $\beta= b \circ\mu$, so the overall
Hamming distance is
\[
H_p(\hat{\beta}; \varepsilon_p,\mu, G)=
E_{\eps_p} \bigl[ h_p(\hat {\beta}; \beta,G) | X
\bigr],
\]
where $E_{\eps_p}$ is the expectation with respect to the law of $b$.
Finally, the minimax Hamming distance under $\mathit{ARW}(\vartheta, r, a, \mu
)$ is
\[
\operatorname{Hamm}_p^*(\vartheta,r, a, G)=\inf_{\hat{\beta}} \sup
_{\mu\in
\Theta
_p^*(\tau_p, a)} H_p(\hat{\beta}; \varepsilon_p,
\mu, G).
\]
In next section, we will see that the minimax Hamming distance does not
depend on $a$ as long as (\ref{Defineag}) holds.

In many recent works, the \textit{probability of exact support
recovery} or \textit{oracle property}
is used to assess optimality; see, for example, \citet{FanLi,YuB,FXZ,Zou}.
However, when signals are rare and weak, exact support
recovery is usually impossible, and the Hamming distance is a more appropriate
criterion for assessing optimality. In comparison, study on the minimax
Hamming distance is not only mathematically more demanding but also
scientifically more relevant than that on the oracle property.

\subsection{Lower bound for the minimax Hamming distance} \label{subsec:LB}
We view the\break (global) Hamming distance as the aggregation of
``local'' Hamming errors. To construct a lower bound for the (global) minimax
Hamming distance, the key is to construct lower bounds
for ``local'' Hamming errors. Fix
$1 \leq j \leq p$. The ``local'' Hamming error at index $j$
is the risk we make among the neighboring indices of $j$ in GOSD, say,
$\{k\dvtx d(j,k) \leq g\}$,
where $g$ is as in (\ref{Defineg}) and $d(j,k)$ is the geodesic
distance between $j$ and $k$
in the GOSD.
The lower bound for such a ``local'' Hamming error is characterized by
an exponent
$\rho_j^*$, which we now introduce.

For any subset $V \subset\{1, 2,\ldots, p\}$, let $I_V$ be the $p
\times1$ vector such that the $j$th coordinate is $1$ if $j \in V$ and
$0$ otherwise. Fixing two subsets $V_0$ and $V_1$ of $\{1, 2,\ldots,
p\}$, we introduce
\begin{equation}
\label{Definevarpi} \varpi^*(V_0, V_1) =
\tau_p^{-2} \min_{\theta^{(0)}, \theta^{(1)} } \bigl\{\bigl(
\theta^{(1)} - \theta^{(0)}\bigr)' G \bigl(
\theta^{(1)} - \theta^{(0)}\bigr) \bigr\},
\end{equation}
subject to $ \{\theta^{(k)} = I_{V_k} \circ\mu^{(k)}\dvtx \mu^{(k)} \in
\Theta_p^*(\tau_p, a), k = 0, 1, \sgn(\theta^{(0)})\neq\sgn
(\theta
^{(1)})\}$,
and let
%
\begin{eqnarray}
\label{defrho} \rho(V_0, V_1)&=& \max\bigl\{|V_0|,
|V_1|\bigr\}\vartheta
\nonumber
\\[-8pt]
\\[-8pt]
\nonumber
&&{}+ \frac{1}{4} \biggl[ \biggl( \sqrt{
\varpi^*(V_0, V_1)r}- \frac{|(|V_1|-|V_0|)|\vartheta}{\sqrt
{\varpi^*(V_0, V_1) r}} \biggr)_+
\biggr]^2.
\end{eqnarray}
The exponent $\rho_j^* = \rho_j^*(\vartheta, r, a, G)$ is defined by
%
\begin{equation}
\label{def of rho_j} \rho_j^*(\vartheta, r, a, G)= \min
_{(V_0, V_1)\dvtx j\in V_0\cup V_1} \rho (V_0, V_1).
\end{equation}
The notation $L_p$ is frequently used in this paper.
%
\begin{definition}\label{DefineLp}
$L_p$, as a positive sequence indexed by $p$, is called a~multi-$\log
(p)$ term\vspace*{1pt} if for any fixed $\delta>0$, $\lim_{p \goto\infty}
L_pp^{\delta} = \infty$ and\break $\lim_{p\goto\infty}L_pp^{-\delta}=0$.
\end{definition}
It can be shown that $L_p p^{-\rho_j^*}$ provides a lower bound for the
``local''
minimax Hamming distance at index $j$, and that
when (\ref{Defineag}) holds, $\rho_j^*(\vartheta, r, a, G)$ does not
depend on $a$; see Lemma~16 in \citet{JZ11} for details. In the
remaining part of the paper, we will write it as $\rho_j^*(\vartheta
,r,G)$ for short.

At the same time, in order for the aggregation of all lower bounds for
``local'' Hamming errors to give a lower bound
for the ``global'' Hamming distance, we need to introduce \textit{graph
of least favorables} (GOLF).
Toward this end, recalling $g$ and $\rho(V_0, V_1)$ as in (\ref
{Defineg}) and \eqref{defrho}, respectively, let
\[
\bigl(V_{0j}^*, V_{1j}^*\bigr) =
\mathop{\margmin\limits_{\{(V_0, V_1)\dvtx j \in V_0 \cup V_1, |V_0
\cup V_1| \leq g\}}}
\rho(V_0, V_1),
\]
and when there is a tie, pick the one that appears first lexicographically.
We can think $(V_{0j}^*, V_{1j}^*)$ as the ``least favorable''
configuration at index $j$.
%
\begin{definition}\label{DefineGOLF}
GOLF is the graph ${\cal G}^{\diamond} = (V, E)$ where $V = \{1, 2,\ldots,
p\}$ and there is an edge between $j$ and $k$ if and only if $(V_{0j}^*
\cup V_{1j}^*) \cap(V_{0k}^* \cup V_{1k}^*) \neq\varnothing$.
\end{definition}
The following theorem is similar to Theorem~14 in \citet{JZ11}, so we
omit the proof.
%
\begin{thmm} \label{thmm:LB}
Suppose (\ref{Defineag}) holds so that $\rho_j^*(\vartheta, r, a, G)$
does not depend on the parameter $a$ for sufficiently large $p$.
As $p\to\infty$, $\operatorname{Hamm}^*_p(\vartheta, r, a,\break G)\geq L_p [d_p({\cal G}
^{\diamond})]^{-1}\sum_{j=1}^p p^{-\rho_j^*(\vartheta, r, G)}$, where
$d_p({\cal G}^{\diamond})$ is the maximum degree of all nodes in
${\cal G}
^{\diamond}$.
\end{thmm}
In many examples, including those of primary interest of this paper,
%
\begin{equation}
\label{golf} d_p\bigl({\cal G}^{\diamond}\bigr) \leq
L_p.
\end{equation}
In such cases, we have the following lower bound:
%
\begin{equation}
\label{LB} \operatorname{Hamm}^*_p(\vartheta, r, a, G)\geq L_p \sum
_{j=1}^p p^{-\rho
_j^*(\vartheta
, r, G)}.
\end{equation}

\subsection{Upper bound and optimality of CASE} \label{subsec:main}
In this section, we show that in a broad context, provided the tuning
parameters are properly set,
CASE achieves the lower bound prescribed in Theorem~\ref{thmm:LB}, up
to some $L_p$ terms. Therefore, the lower bound in Theorem~\ref
{thmm:LB} is tight, and CASE achieves the optimal rate of convergence.

For a given $\gamma> 0$, we focus on linear models with the Gram
matrix from
\[
{\cal M}_p^*(\gamma, g, c_0, A_1) =
\widetilde{{\cal M}}_p(c_0, g) \cap\cM_p(
\gamma, A_1),
\]
where we recall that the two terms on the right-hand side are defined
in \eqref{matrixclass} and~\eqref{matrixclass2}, respectively. The
following lemma is proved in Section B in \citet{CASEsupp}. 

\begin{lemma} \label{lem:GOLFdegree}
For $G\in\cM^*_p(\gamma,g,c_0, A_1)$, the maximum degree of nodes in
GOLF satisfies $d_p(\cG^{\diamond})\leq L_p$.
\end{lemma}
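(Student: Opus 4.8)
The plan is to reduce the degree bound to a \emph{spatial locality} property of the least favorable configurations, and then to establish that property by a decoupling argument that plays the polynomial off-diagonal decay of $G$ against the uniform lower bound on the smallest eigenvalues of its small principal submatrices.

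Write $W_j = V_{0j}^* \cup V_{1j}^*$, so that $j \in W_j$ and, by the size constraint in the $\margmin$ defining the least favorable pair, $|W_j| \leq g$. By Definition \ref{DefineGOLF}, $j$ and $k$ are adjacent in $\cg^{\diamond}$ only when $W_j \cap W_k \neq \emptyset$, whence
\[
d_p(\cg^{\diamond}) \leq \max_{j} |\{k : W_j \cap W_k \neq \emptyset\}| \leq \max_j \sum_{i \in W_j} |\{k : i \in W_k\}|.
\]
Since $|W_j| \leq g$, it is enough to bound, for each fixed index $i$, the number of $k$ with $i \in W_k$. I will exhibit a constant $C_0 = C_0(\vartheta, r, a, c_0, A_1, \gamma, g)$ such that $W_k \subseteq \{m : |m-k| \leq C_0\}$ for every $k$; then $i \in W_k$ forces $|i-k| \leq C_0$, so that $|\{k : i \in W_k\}| \leq 2C_0+1$ and $d_p(\cg^{\diamond}) \leq g(2C_0+1)$, a constant, which is in particular $\leq L_p$.

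For locality I argue by contradiction from the minimality of $(V_{0k}^*, V_{1k}^*)$. Fix a decoupling distance $D_0$ and let $A$ be the set of nodes of $W_k$ reachable from $k$ by successive jumps of length $\leq D_0$; since $|W_k| \leq g$ we have $A \subseteq \{m : |m-k| \leq (g-1)D_0\}$, and if $B := W_k \setminus A$ is empty we are done with $C_0 = (g-1)D_0$. Otherwise every node of $B$ lies at distance $> D_0$ from $A$. Let $\delta = \theta^{(1)} - \theta^{(0)}$ attain the minimum in \eqref{Definevarpi}; then
\[
\delta' G \delta = \delta_A' G^{A,A}\delta_A + \delta_B' G^{B,B}\delta_B + 2\delta_A' G^{A,B}\delta_B .
\]
Here $G \in \widetilde{\cM}_p(c_0,g)$ with $|B| \leq g$ gives $\delta_B' G^{B,B}\delta_B \geq c_0\|\delta_B\|^2$, while $|G(i,j)| \leq A_1(1+|i-j|)^{-\gamma}$ together with $|\mu_i^{(0)}|,|\mu_i^{(1)}| \leq a\tau_p$ bound $|2\delta_A'G^{A,B}\delta_B| \leq 8 a^2 g^2 A_1 (1+D_0)^{-\gamma}\tau_p^2$. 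Choosing the constant $D_0$ large enough that the cross term falls below $\frac{1}{2}c_0\tau_p^2$, an \emph{active} far block (with $\delta_B \neq 0$, hence $\|\delta_B\| \geq \tau_p$ as $G$ has unit diagonal) strictly increases $\delta'G\delta$ over its $A$-restriction, while an \emph{inactive} far block contributes nothing to $\varpi^*$ but inflates $\max\{|V_0|,|V_1|\}$. In both cases deleting $B$ neither raises $\varpi^*$ nor raises $\max\{|V_0|,|V_1|\}$; granting that $\rho$ then cannot increase, this contradicts minimality, forcing $B=\emptyset$ and $W_k \subseteq \{m : |m-k| \leq (g-1)D_0\}$.

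The crux, which I expect to be the main obstacle, is justifying that deleting a decoupled far block does not increase $\rho$. This is delicate because $\rho$ is \emph{not} monotone in configuration size: strongly correlated (hence, by the decay, \emph{nearby}) signals of opposite sign can shrink $\varpi^*$ and so lower $\rho$, which is exactly the signal-cancellation effect that multivariate screening exploits. What is needed is the weaker \emph{superadditivity under decoupled unions} — that appending a block far from the rest can only increase $\rho$ — and the difficulty is the joint, nonmonotone interplay in \eqref{defrho} of the $\max\{|V_0|,|V_1|\}$ term, the discount $-|(|V_1|-|V_0|)|\vartheta/\sqrt{\varpi^* r}$, and the outer positive-part truncation. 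I would settle this from the decoupling identity $\varpi^*(V_{0k}^*,V_{1k}^*) = \varpi^*_A + \varpi^*_B + o(1)$, with $\varpi^*_A,\varpi^*_B$ the separations of the two restricted sub-configurations, combined with a direct comparison of \eqref{defrho} across the finitely many size profiles allowed by $|W_k|\leq g$; the unit-diagonal normalization, under which a far active node adds at least one unit and a far sign-flip at least four to $\varpi^*$, is what prevents a far block from paying for itself. Two bookkeeping points remain: the $A$-restriction must stay admissible, which can fail only when $V_{0k}^* = V_{1k}^*$ with the sign difference carried entirely by $B$, in which case I would instead compare against $(\{k\},\emptyset)$, whose exponent dominates that of any far sign-flip under the unit-diagonal normalization; and ties in the $\margmin$, which the lexicographic rule resolves and which cannot favor a non-local configuration once the strict inequalities above are in force. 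The argument in fact produces a constant bound, which is stronger than the asserted $\leq L_p$.
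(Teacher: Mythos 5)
Your reduction of the degree bound to spatial locality of $W_k=V_{0k}^*\cup V_{1k}^*$ is sound, and the decoupling estimate (cross term of order $(1+D_0)^{-\gamma}\tau_p^2$ against the $c_0$ eigenvalue floor) is the right raw material. But the proof is missing its central step exactly where you flag it: you never prove that deleting a decoupled far block $B$ cannot increase $\rho$, and this is not a deferrable bookkeeping point --- it is the entire content of the lemma. Because $\rho$ in \eqref{defrho} couples $\max\{|V_0|,|V_1|\}$, the imbalance discount $|(|V_1|-|V_0|)|\vartheta/\sqrt{\varpi^* r}$, and a positive-part truncation, deleting nodes can simultaneously lower $\varpi^*$ (which lowers the second term), shrink the imbalance (which removes discount and so \emph{raises} the second term), and leave the max unchanged; whether the net change is nonpositive is precisely the case analysis over size profiles that you postpone, and no uniform margin is exhibited. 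Your insistence on a constant decoupling distance $D_0$ makes this harder, not easier: the cross-term error is then a fixed positive constant rather than $o(1)$, so contradicting exact minimality requires every non-local configuration to lose by a constant margin exceeding that error --- again, the unproven comparison. Two supporting claims are also incorrect or unsupported as stated: $\delta_B\neq 0$ does not imply $\|\delta_B\|\geq\tau_p$, since a node in $V_0\cap V_1$ with agreeing signs can carry an arbitrarily small nonzero $\delta_i$; and when the deletion comparison yields equality rather than strict inequality, the lexicographic tie-breaking in Definition \ref{DefineGOLF} has no reason to select the local configuration.

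For comparison, the paper never attempts locality of the exact minimizers. It shows only that for each $j$ in the bulk \emph{some} configuration within $o(1)$ of $\rho_j^*$ is supported in $\{j-\log(p),\ldots,j+\log(p)\}$, which already gives $d_p(\cg^{\diamond})\leq 2\log(p)+1\leq L_p$ --- a logarithmic rather than constant bound, but all that the lemma and Theorem \ref{thm:LB} require, since $p^{o(1)}$ factors are absorbed into $L_p$. Its construction splits off far blocks at separation $\log(p)/g$ (so cross terms are genuinely $o(1)$ by the $\gamma$-decay) and, crucially, \emph{rebalances} the free set $F$ into two near-equal halves $F',F''$ at every step. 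The rebalancing lets the comparison be run at the level of $\psi(F,N)$ in \eqref{psi}, using $\rho(V_0,V_1)\geq\psi(F,N)$ with equality for balanced splits, so that the non-monotone $\rho$-comparison on which your argument founders collapses to two monotone facts: $\omega(F_1,N_1)\leq\omega(F,N)+o(1)$ and $|F_1|+2|N_1|\leq|F|+2|N|-1$, with the spare $\vartheta/2$ from the second absorbing the $o(1)$ from the first. If you want your stronger conclusion (constant-distance locality of the exact minimizers, hence a constant degree bound), you must prove your superadditivity step with a quantitative margin; the detour through $\psi$ is precisely how the authors avoid having to do so.
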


Combining Lemma~\ref{lem:GOLFdegree} with Theorem~\ref
{thmm:LB}, the lower bound \eqref{LB} holds.

For any linear filter $D = D_{h, \eta}$, let
$\varphi_{\eta}(z) = 1 + \eta_1 z +\cdots+ \eta_h z^h$
be the so-called \textit{characterization polynomial}.
We need some regularity conditions:
\begin{itemize}
\item\textit{Regularization Condition \textup{A} \textup{(}RCA\textup{)}}. For any root $z_0$ of
$\varphi_{\eta}(z)$, $|z_0| \geq1$.
\item\textit{Regularization Condition \textup{B} \textup{(}RCB\textup{)}}.
There are constants $\kappa> 0$ and $c_1>0$ such that $\lambda_k^*(D G
D') \geq c_1 k^{-\kappa}$ ($\lambda_k^*$ is as in Section~\ref{subsec:LB}).
\end{itemize}
For many well-known linear filters such as adjacent differences,
seasonal differences, etc., RCA is satisfied.
Also, RCB is only a mild condition since $\kappa$ can be any positive
number. For example, RCB holds in the change-point model and
long-memory time series model with certain $D$ matrices.
In general, $\kappa$ is not $0$ because when $DG$ is sparse, $DGD'$ is
very likely to be
approximately singular, and the associated value of $\lambda_k^*$ can
be small when $k$ is large. This
is true even for very simple~$G$ [e.g., $G = I_p$, $D = D_{1, \eta}$
and $\eta= (1, -1)'$].

At the same time, these conditions can be further relaxed. For example,
for the change-point problem, the Gram matrix has barely any
off-diagonal decay,
and does not belong to ${\cal M}_p^*$. Nevertheless, with slight
modification in the procedure, the main results continue to hold.

CASE uses tuning parameters $(\delta, m, \tq, \ell^{ps}, \ell^{pe},
u^{pe}, v^{pe})$.
The choice of $\delta$ is flexible, and we usually set $\delta=
1/\log(p)$.
For the main theorem below, we treat $m$ as given. In practice,
taking $m$ to be a small integer (say, $\leq3$) is usually sufficient, unless
the signals are relatively dense (say, $\vartheta< 1/4$).
The choice of $\ell^{ps}$ and $\ell^{pe}$ are also relatively flexible,
and letting $\ell^{ps}$ be a sufficiently large constant and
$\ell^{pe}$ be $(\log(p))^{\nu}$ for some constant $\nu< (1 -
1/\alpha
)/(\kappa+ 1/2)$ is sufficient, where $\alpha$ is as in Definition~\ref
{def:sparse}, and $\kappa$ is as in RCB.

At the same time, in principle, the optimal choices of
$(u^{pe}, v^{pe})$ are
%
\begin{equation}
\label{paracond} u^{pe}=\sqrt{2\vartheta\log p},\qquad v^{pe}=
\sqrt{2r\log p},
\end{equation}
which depend on the underlying parameters $(\vartheta, r)$ that are
unknown to us.
Despite this, our numeric studies in Section~\ref{sec:Simul} suggest
that the choices of $(u^{pe}, v^{pe})$ are relatively flexible; see
Sections~\ref{sec:Simul}--\ref{sec:Discu}
for more discussions.

Last, we discuss how to choose $\tq= \{t(\hat{F}, \hat{N})\dvtx \mbox
{$(\hat{F}, \hat{N})$ are defined as in the}$ $\mbox{$\mathit{PS}$-step}\}$. Let
$t(\hat{F}, \hat{N}) = 2 q \log(p)$, where $q > 0$ is a constant.
It turns out that the main result (Theorem~\ref{thmm:main} below) holds
as long as
%
\begin{equation}
\label{tq} q_0 \leq q \leq q^*(\hat{F}, \hat{N}),
\end{equation}
where $q_0>0$ is an appropriately small constant, and for any subsets
$(F, N)$,
%
\begin{eqnarray}\qquad
\label{Defineq*}&& q^*(F,N)
\nonumber
\\[-8pt]
\\[-8pt]
\nonumber
&&\qquad= \max \bigl\lbrace q\dvtx \bigl(|F|+|N|\bigr)\vartheta+ \bigl[
\bigl(\sqrt{\tilde {\omega }(F,N)r}-\sqrt{q|F|}\bigr)_+\bigr]^2\geq
\psi(F,N) \bigr\rbrace;
\end{eqnarray}
here,
%
\begin{eqnarray}\quad
\label{psi} \psi(F,N) &= &\frac{(|F|+2|N|)\vartheta}{2}
\nonumber
\\[-8pt]
\\[-8pt]
\nonumber
&&{}+ \cases{ %
 \displaystyle\frac{1}{4}\omega(F,N)r, \qquad |F| \mbox{ is even},
\vspace*{2pt}\cr
\displaystyle\frac{\vartheta}{2} + \frac{1}{4}\bigl[\bigl( \sqrt{\omega(F,N)r}- \vartheta
/\sqrt {\omega(F,N)r} \bigr)_+\bigr]^2,\vspace*{2pt}\cr
\hspace*{78pt}|F| \mbox{ is odd},}
\end{eqnarray}
with
%
\begin{equation}
\label{omega(F,N)} \omega(F,N)= \min_{\xi\in\mathbb{R}^{|F|}\dvtx |\xi_i|\geq1} \xi'
\bigl[G^{F,F}- G^{F,N}\bigl(G^{N,N}
\bigr)^{-1}G^{N,F}\bigr]\xi
\end{equation}
and
%
\begin{equation}
\label{tomega(F,N)} \tilde{\omega}(F, N) = \min_{\xi\in\mathbb{R}^{|F|}\dvtx |\xi_i|\geq
1} \xi
'\bigl[Q_{F,F}- Q_{F,N}(Q_{N,N})^{-1}Q_{N,F}
\bigr]\xi,
\end{equation}
where $Q_{F,N}= (B^{\call^{ps},F})'(H^{\call^{ps},\call^{ps}})^{-1}
(B^{\call^{ps},N})$ with $\call=F\cup N$, and $Q_{N,F}$, $Q_{F,F}$ and
$Q_{N,N}$ are defined similarly. Compared to \eqref{def of W and
Q(ps)}, we see that $Q_{F,N}$, $Q_{F,N}$, $Q_{N,F}$ and $Q_{N,N}$ are
all submatrices of $Q$. Hence, $\tilde{\omega}(F,N)$ can be viewed as a
counterpart of $\omega(F,N)$ by replacing the submatrices of $G^{\call
,\call}$ by the corresponding ones of $Q$.

From a practical point of view, there is a trade-off in choosing $q$: a
larger $q$ would increase the number of falsely selected variables in
the $\mathit{PS}$-step, but would also reduce the computational cost in the
$\mathit{PE}$-step. The following is a convenient choice which we recommend in
this paper:
%
\begin{equation}
\label{constant critical value} t(\hat{F},\hat{N}) = 2\tilde{q}|\hat{F}|\log(p),
\end{equation}
where $0 < \tilde{q} < c_0 r /4$ is a constant, and $c_0$ is as in
${\cal M}_p^*(\gamma, g, c_0, A_1)$.

We are now ready for the main result of this paper.
%
\begin{thmm} \label{thmm:main}
Suppose that for sufficiently large $p$, $G \in{\cal M}_p^*(\gamma, g,
c_0, A_1)$, $D_{h, \eta} G \in\cM_p(\alpha, A_0)$ with $\alpha>1$ and
that RCA-RCB hold.
Consider $\hat{\beta}^{\mathrm{case}} = \hat{\beta}^{\mathrm{case}}(Y; \delta, m,
\tq,
\ell^{ps}, \ell^{pe}, u^{pe}, v^{pe}, D_{h, \eta}, X, p)$
with the tuning parameters specified above.
Then as $p \goto\infty$,
%
\begin{eqnarray}
\label{main UB} &&\sup_{\mu\in\Theta_p^*(\tau_p, a)} H_p\bigl(\hat{
\beta}^{\mathrm{case}}; \varepsilon _p,\mu, G\bigr)
\nonumber
\\[-8pt]
\\[-8pt]
\nonumber
&&\qquad\leq L_p
\Biggl[p^{1-(m+1) \vartheta} + \sum_{j=1}^p
p^{-\rho
_j^*(\vartheta,r, G)} \Biggr] + o(1).
\end{eqnarray}
\end{thmm}
Combine Lemma~\ref{lem:GOLFdegree} and Theorem~\ref{thmm:main}. Given
the parameter $m$ is appropriately large, both the upper bound and the
lower bound are tight, and CASE achieves the optimal rate of
convergence prescribed by
%
\begin{equation}
\label{simple UB} \hamm_p^*(\vartheta, r, a, G) = L_p \sum
_{j=1}^p p^{-\rho
_j^*(\vartheta
, r, G)} + o(1).
\end{equation}
Theorem~\ref{thmm:main} is proved in Section A in \citet{CASEsupp}, 
where we explain the key idea behind the procedure, as well as the
selection of the tuning parameters.

\subsection{Application to the long-memory time series model} \label
{subsec:Toeplitz}
The long-me\-mory time series model in Section~\ref{sec:intro} can be
written as a regression model,
\[
Y = X\beta+ z, \qquad z\sim N(0, I_n),
\]
where the Gram matrix $G$ is asymptotically Toeplitz and has slow
off-diagonal decays. Without loss of generality, we consider the
following idealized case where $G$ is an exact Toeplitz matrix
generated by a spectral density $f$:
%
\begin{equation}
\label{lmG} G(i,j) = \frac{1}{2\pi} \int_{-\pi}^{\pi}
\cos\bigl(|i - j| \omega\bigr) f(\omega ) \,d \omega,\qquad 1 \leq i, j \leq p.
\end{equation}
In the literature [\citet{ChenHur,Moulines}], the spectral density for a
long-memory process is usually characterized as
%
\begin{equation}
\label{lmmodel} f(\omega) = \bigl|1 - e^{\sqrt{-1} \omega}\bigr|^{-2\phi} f^*(\omega),
\end{equation}
where $\phi\in(0,1/2)$ is the long-memory parameter, $f^*(\omega)$ is
a positive symmetric function that is continuous
on $[-\pi, \pi]$ and is twice differentiable except at $\omega= 0$.

In this model, the Gram matrix is nonsparse, but it is sparsifiable.
To see the point, let
$\eta= (1, -1)'$ and let $D = D_{1, \eta}$ be the first-order adjacent
row-differencing. On one hand, since the spectral density $f$ is
singular at the origin, it follows from the Fourier analysis that
$|G(i,j)| \geq C (1 + |i - j|)^{-(1 - 2\phi)}$,
and hence $G$ is nonsparse. On the other hand,
it is seen that
\[
B(i,j) = \sqrt{-1} \int_{|j-i|}^{|j-i|+1} \widehat{\omega
f(\omega )}(\lambda)\,d\lambda,
\]
where we recall that $B = D G$ and note that $\hat{g}$ denotes the
Fourier transform of $g$.
Compared to $f(\omega)$, $\omega f(\omega)$ is nonsingular at the
origin. Additionally, it is seen that $B \in\mathcal{M}_p(2-2\phi,
A)$, where $2 - 2 \phi> 1$, so $B$ is sparse (a similar claim applies
to $H = D G D'$). This shows that $G$ is sparsifiable by adjacent
row-differencing.

In this example, there is a function $\rho_{\mathrm{lts}}^*(\vartheta, r; f)$
that only depends on $(\vartheta, r, f)$ such that
\[
\max_{\{ j\dvtx \log(p) \leq j \leq p - \log(p) \}} \bigl\{\bigl | \rho _j^*(\vartheta , r,
G) - \rho_{\mathrm{lts}}^*(\vartheta, r; f)\bigr| \bigr\} \goto0\qquad \mbox{as $p \goto
\infty$},
\]
where the subscript ``lts'' stands for long-memory time series.
The following theorem can be derived from Theorem~\ref{thmm:main}, and
is proved in Section B in \citet{CASEsupp}. 

\begin{thmm} \label{thmm:Toeplitz}
For a long-memory time series model where $|(f^*)''(\omega)| \leq C
|\omega|^{-2}$, the minimax Hamming distance then satisfies $\hamm
_p^*(\vartheta, r, G) =  L_p p^{1- \rho_{\mathrm{lts}}^*(\vartheta, r; f)}$.
If we apply CASE by letting $(m + 1) \vartheta> \rho
_{\mathrm{lts}}^*(\vartheta
, r; f)$,
$\eta= (1, -1)'$, and the tuning parameters be set as in Section~\ref{subsec:main},
then
\[
\sup_{\mu\in\Theta_p^*(\tau_p, a)} H_p\bigl(\hat{\beta}^{\mathrm{case}};
\eps _p, \mu, G\bigr) \leq L_p p^{1- \rho_{\mathrm{lts}}^*(\vartheta, r; f)} + o(1).
\]
\end{thmm}

Theorem~\ref{thmm:Toeplitz} can be interpreted by the so-called
\textit
{phase diagram}.
Phase diagram is a way to visualize settings where the signals
are so rare and weak that successful variable selection is simply
impossible [\citet{UPS}].
In detail, for a spectral density $f$ and $\vartheta\in(0,1)$, let
$r_{\mathrm{lts}}^*(\vartheta) = r_{\mathrm{lts}}^*(\vartheta; f)$
be the unique solution of $\rho_{\mathrm{lts}}^*(\vartheta, r; f) = 1$.
Note that $r = r^*_{\mathrm{lts}}(\vartheta)$ characterizes the minimum signal
strength required for exact support recovery with high probability.
The following proposition is proved in Section B in \citet{CASEsupp}. 
%
\begin{lemma} \label{lem:Tpboundary}
Under the conditions of Theorem~\ref{thmm:Toeplitz},
if $(f^*)''(0)$ exists,
then $r^*_{\mathrm{lts}}(\vartheta; f)$ is a decreasing function in $\vartheta$,
with limits $1$ and $\frac{2}{\pi} \int_{-\pi}^{\pi} f^{-1}(\omega
) \,d
\omega$ as $\vartheta\goto1$ and $\vartheta\goto0$, respectively.
\end{lemma}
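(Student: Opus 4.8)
The plan is to work directly with the exponent $\rho_{lts}^*(\vartheta, r; f)$, since $r_{lts}^*(\vartheta)$ is by definition the unique root in $r$ of $\rho_{lts}^*(\vartheta, r; f) = 1$. Recall that $\rho_{lts}^*$ is the interior limit of $\rho_j^*$, and that $\rho_j^* = \min_{(V_0, V_1):\, j \in V_0 \cup V_1} \rho(V_0, V_1)$ with $\rho(V_0,V_1)$ given in \eqref{defrho} and $\varpi^*(V_0,V_1)$ in \eqref{Definevarpi}. Because $G$ is Toeplitz, in the interior the whole optimization is translation invariant, so the limit exists and both the monotonicity and the two boundary values can be read off from the configurations $(V_0,V_1)$ that bind. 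A first key observation is that the $\tau_p^{-2}$ prefactor in \eqref{Definevarpi} cancels the overall scaling, so $\varpi^*(V_0,V_1)$ depends only on $G$, $a$ and the two supports, not on $(\vartheta,r)$. Writing $\rho^*_{lts} = \min_c \rho_c$ as a minimum of functions increasing in $r$, its unit level set is attained at $r = \max_c r_c$, where $r_c$ solves $\rho_c = 1$; thus $r_{lts}^*(\vartheta)$ equals the largest, over configurations, of the individual thresholds, i.e.\ it is governed by the hardest configuration.

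Next I would establish monotonicity. With $k = \max\{|V_0|,|V_1|\}$, $\Delta = \big||V_1|-|V_0|\big|$ (so $k \geq \Delta$) and $u = \sqrt{\varpi^*(V_0,V_1)\,r}$, a direct differentiation of \eqref{defrho} gives $\partial_r \rho(V_0,V_1) \geq 0$ (strictly positive where the positive part is active) and, on the active region, $\partial_\vartheta \rho(V_0,V_1) = k - \tfrac{\Delta}{2} + \tfrac{\Delta^2 \vartheta}{2\varpi^* r} \geq 0$ (while $\partial_\vartheta \rho = k \geq 1$ when the positive part vanishes). Hence every $\rho(V_0,V_1)$ is nondecreasing in both $r$ and $\vartheta$, and so is their minimum $\rho_{lts}^*$. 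Monotonicity in $r$ together with continuity makes the root $r_{lts}^*(\vartheta)$ well defined and unique; monotonicity in $\vartheta$ then gives the decreasing claim by a crossing argument: for $\vartheta_1 < \vartheta_2$, $\rho_{lts}^*(\vartheta_2, r_{lts}^*(\vartheta_1)) \geq \rho_{lts}^*(\vartheta_1, r_{lts}^*(\vartheta_1)) = 1 = \rho_{lts}^*(\vartheta_2, r_{lts}^*(\vartheta_2))$, and since $\rho_{lts}^*(\vartheta_2,\cdot)$ increases in $r$ we conclude $r_{lts}^*(\vartheta_1) \geq r_{lts}^*(\vartheta_2)$, with strictness coming from the strict sign of $\partial_\vartheta\rho$ for the binding configuration.

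For the limit $\vartheta \to 1$, I would identify the single-signal configuration $(V_0,V_1)=(\emptyset,\{j\})$ as the one with the largest threshold. Any configuration with $k \geq 2$ has $\rho \geq k\vartheta \geq 2\vartheta > 1$ for all $r$ once $\vartheta > 1/2$, so its threshold is degenerate; among $k=1$ configurations, the single-signal one has $\varpi^* = G(j,j) = 1$ and $\Delta = 1$, giving $\rho = \vartheta + \tfrac14[(\sqrt r - \vartheta/\sqrt r)_+]^2$ and a threshold tending to $1$, whereas the remaining $k=1$ configurations (a single-site sign flip, or a unit "move" $V_0=\{i\}, V_1=\{i'\}$) have strictly smaller thresholds tending to $0$. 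Hence $r_{lts}^*(\vartheta) \to 1$ as $\vartheta \to 1$.

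The main obstacle is the limit $\vartheta \to 0$, where configurations are allowed to grow. Here the prefactor $k\vartheta$ in \eqref{defrho} is negligible provided the support grows like $o(1/\vartheta)$, and for configurations with $|V_0|=|V_1|$ (so $\Delta=0$) the threshold is $r_c = 4(1-k\vartheta)/\varpi^*(V_0,V_1)$; maximizing over configurations therefore drives $\varpi^*$ to its infimum, and $r_{lts}^*(\vartheta) \to 4/\varpi_\infty^*$ with $\varpi_\infty^* := \inf_{(V_0,V_1)} \varpi^*(V_0,V_1)$. The crux is the identity $\varpi_\infty^* = 1/[G^{-1}]_{jj}$. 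The lower bound holds because any admissible configuration forces $|\delta_{i_0}| \geq \tau_p$ at some site $i_0$, so $\tau_p^{-2}\,\delta'G\delta \geq \min\{\tau_p^{-2}\delta'G\delta:\ \delta_{i_0}=\tau_p,\ \delta_{-i_0}\text{ free}\} = 1/[G^{-1}]_{i_0 i_0} = 1/[G^{-1}]_{jj}$ by the Schur-complement identity and interior translation invariance. The matching upper bound is achieved by taking $V_0 = V_1$ equal to a large symmetric block and realizing the unconstrained Schur-complement optimizer on it; this is possible precisely because on $V_0 \cap V_1$ the increment $\delta_i = \theta^{(1)}_i - \theta^{(0)}_i$ is a difference of two magnitude-$\geq \tau_p$ entries and may be made arbitrarily small (with one distant sign flip to meet the $\sgn(\theta^{(0)})\neq\sgn(\theta^{(1)})$ requirement). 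Finally I would invoke the Toeplitz inverse-diagonal limit $[G^{-1}]_{jj} \to \tfrac{1}{2\pi}\int_{-\pi}^{\pi} f^{-1}(\omega)\,d\omega$ for interior $j$; this is exactly where the hypothesis that $(f^*)''(0)$ exists enters, since $f$ is singular at the origin and enough regularity of $f^*$ there is needed to control $f^{-1}$ and guarantee convergence of the inverse diagonal. Combining, $r_{lts}^*(\vartheta) \to 4/\varpi_\infty^* = 4\big(\tfrac{1}{2\pi}\int_{-\pi}^{\pi} f^{-1}\big) = \tfrac{2}{\pi}\int_{-\pi}^{\pi} f^{-1}(\omega)\,d\omega$, as claimed; the consistency $\tfrac{2}{\pi}\int f^{-1} \geq 4 > 1$ (from $\tfrac{1}{2\pi}\int f = G(j,j) = 1$ and Jensen) matches the decreasing profile.
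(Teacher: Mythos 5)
Your overall route parallels the paper's: monotonicity of $r^*_{lts}$ obtained from a family of configuration-indexed thresholds, the single-signal configuration governing the $\vartheta \goto 1$ limit, and, for $\vartheta \goto 0$, a reduction to the inverse diagonal of $G$ followed by the Toeplitz limit $[G^{-1}]_{jj} \goto \frac{1}{2\pi}\int_{-\pi}^{\pi} f^{-1}(\omega)d\omega$ (the paper's Lemma \ref{lem:Toeptrace}, which is indeed where the regularity of $f^*$ enters). Moreover, your lower bound on $\varpi^*$ --- any sign-difference site $i_0$ forces $|\delta_{i_0}| \geq \tau_p$, hence $\tau_p^{-2}\delta' G \delta \geq 1/[G^{-1}]_{i_0 i_0}$ by the Schur-complement identity --- is a clean substitute for the paper's two monotonicity facts about $\omega(F,N)$ (that it decreases as $N$ grows, for fixed $F$ and for fixed $F \cup N$).

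However, the matching upper bound, which is the heart of the $\vartheta \goto 0$ claim, fails as stated. If $V_0 = V_1$, then at every site of the common support both vectors have magnitude $\geq \tau_p$, so the required sign difference $\sgn(\theta^{(0)}) \neq \sgn(\theta^{(1)})$ can only be realized by an opposite-sign pair, forcing $|\delta_{i_1}| \geq 2\tau_p$ at the flip site --- no matter how ``distant'' that site is, its contribution cannot be damped. Your own lower-bound argument then gives $\varpi^*(V_0,V_1) \geq 4/[G^{-1}]_{i_1 i_1}$ for every configuration in this family, so this family only certifies $\lim_{\vartheta \goto 0} r^*_{lts} \geq \max_j [G^{-1}]_{jj} \goto \frac{1}{2\pi}\int_{-\pi}^{\pi} f^{-1}(\omega) d\omega$, a factor of $4$ short of the claimed $\frac{2}{\pi}\int_{-\pi}^{\pi} f^{-1}(\omega)d\omega$; your argument as written only brackets the limit between these two values. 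The extremal configuration must have $V_0 \neq V_1$: take $V_1 = V_0 \cup \{j\}$ with $\theta_j^{(0)} = 0$ and $\theta_j^{(1)} = \tau_p$, so that the unique sign-difference site contributes exactly $\tau_p$ rather than $2\tau_p$, and let the large block $V_0$ carry same-sign entries whose differences realize the Schur-complement optimizer (these differences are small, hence realizable). This configuration has $\big| |V_1| - |V_0| \big| = 1$ rather than $0$, which is harmless in the limit $\vartheta \goto 0$. It is precisely the ``one signal versus none, against a dense background'' configuration that the paper's argument isolates: in the $(F,N)$ parametrization, the minimum of $\omega(F,N)$ is attained at $F = \{j\}$, $N = \{1,\dots,p\}\setminus\{j\}$, with minimal value the reciprocal of $[G^{-1}]_{jj}$ --- note that $F$ there consists of sites where exactly one of the two vectors vanishes, never of opposite-sign pairs.
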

%


Call the two-dimensional space $\{(\vartheta, r)\dvtx 0 <
\vartheta< 1, r > 0\}$ the \textit{phase space}. Interestingly,
there is a partition of the phase space as follows.
\begin{itemize}
\item\textit{Region of no recovery $\{(\vartheta,r)\dvtx 0 < r
<\vartheta, 0 < \vartheta< 1\}$}. In this region, the minimax
Hamming distance $\gtrsim p \eps_p$, where $p \eps_p$ is approximately
the number of signals. In this region, the signals are too rare and
weak and successful variable selection is impossible.
\item\textit{Region of almost full recovery $\{(\vartheta,r):
\vartheta< r < r_{\mathrm{lts}}^*(\vartheta; f), 0 < \vartheta< 1 \}$}. In
this region, the minimax Hamming distance is much larger than $1$ but
much smaller than $p \eps_p$. Therefore, the optimal procedure can
recover most of the signals but not all of them.
\item\textit{Region of exact recovery $\{(\vartheta,r): r >
r_{\mathrm{lts}}^*(\vartheta; f), 0 < \vartheta< 1 \}$}.
In this region, the
minimax Hamming distance is $o(1)$. Therefore, the optimal procedure
recovers all signals with probability $\approx1$.
\end{itemize}
Because of the partition of the phase space, we call this the \textit
{phase diagram}.

From time to time, we wish to have a more explicit formula for the rate
$\rho_{\mathrm{lts}}^*(\vartheta, r; f)$ and the critical value
$r_{\mathrm{lts}}^*(\vartheta; f)$.
In general, this is a hard problem, but both quantities can be
computed numerically when $f$ is given. In Figure~\ref{fig:toeplitz},
we display the phase diagrams for the \textit{autoregressive
fractionally integrated moving average} process (FARIMA) with
parameters $(0, \phi, 0)$ [\citet{FanTimeS}], where
%
\begin{equation}
\label{f in FARIMA} f^*(\omega) = \frac{\Gamma^2(1-\phi)}{\Gamma(1-2\phi)}.
\end{equation}
Take $\phi= 0.35, 0.25$, for example, $r_{\mathrm{lts}}^*(\vartheta; f) \approx
7.14, 5.08$ for small
$\vartheta$.


\begin{figure}[b]

\includegraphics{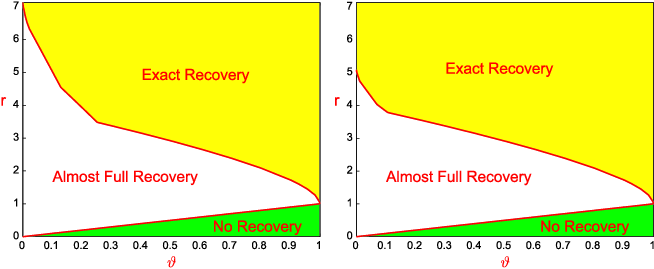}

\caption{Phase diagrams corresponding to the $\operatorname{FARIMA}(0,\phi,0)$
process. Left: $\phi= 0.35$. Right: $\phi= 0.25$.} \label
{fig:toeplitz}
\end{figure}

\subsection{Application to the change-point model} \label{subsec:chang}
The change-point model in the \hyperref[sec:intro]{Introduction} can be viewed as a special
case of model (\ref{model}), where $\beta$ is as in (\ref{sparse signal
model on beta}), and the Gram matrix satisfies
%
\begin{equation}
G(i,j)= \min\{i,j\}, \qquad 1\leq i,j\leq p. \label{Gchangpt}
\end{equation}
For technical reasons, it is more convenient \textit{not} to normalize
the diagonals of $G$ to~$1$.

The change-point model can be viewed as an ``extreme'' case of what is
studied in this paper. On one hand,
the Gram matrix $G$ is ``ill-posed,'' and each row of $G$ does not
satisfy the condition of off-diagonal decay in Theorem~\ref{thmm:main}.
On the other hand, $G$ has a very special structure which can be
largely exploited. In fact, if we sparsify $G$ using the linear filter
$D = D_{2, \eta}$, where $\eta= (1, -2, 1)'$, it is seen that $B = DG
= I_p$, and $H = D G D'$ is a tri-diagonal matrix with $H(i,j)= 2\cdot
1\{i=j\}- 1\{|i-j|=1\}- 1\{i=j=p\}$, which are very simple matrices.
For these reasons, we modify the CASE as follows:
\begin{itemize}
\item Due to the simple structure of $B$, we do not need patching in
the $\mathit{PS}$-step (i.e., $\ell^{ps} = 0$).
\item For the same reason, the choices of thresholds $t(\hat{F}, \hat
{N})$ are more flexible than before, and taking $t(\hat{F},\hat{N}) =
2q\log(p)$ for a proper constant $q > 0$ works.
\item Since $H$ is ``extreme'' (the smallest eigenvalue tends to $0$ as
$p \goto\infty$), we have to modify the $\mathit{PE}$-step carefully.
\end{itemize}

In detail, the $\mathit{PE}$-step for the change-point model is as follows.
Given $\ell^{pe}$, let ${\cal G}^+$ be as in Definition~\ref{DefineGplus}.
Recall that ${\cal U}_p^*$ denotes the set of all retained indices at
the end of the $\mathit{PS}$-step. We view ${\cal U}_p^*$ as a subgraph of
${\cal G}
^+$, and let $\call\lhd{\cal U}_p^*$ be one of its components. The
goal is to
split $\call$ into $N$ different subsets
\[
\call= \call^{(1)} \cup\cdots\cup\call^{(N)},
\]
and for each subset $\call^{(k)}$, $1 \leq k \leq N$, we construct a
patched set $\call^{(k), pe}$. We then estimate $\beta^{\call^{(k)}}$
separately using \eqref{obj of minimization}. Putting
$\beta^{\call^{(k)}}$ together gives our estimate of~$\beta^{\call}$.

The subsets $\{(\call^{(k)}, \call^{(k), pe})\}_{k = 1}^N$ are
recursively constructed as follows. Denote $l = |\call|$, $M = (\ell
^{pe}/2)^{1/(l + 1)}$, and write
\[
\call= \{j_1, j_2,\ldots, j_l\},\qquad
j_1 < j_2 < \cdots< j_l.
\]
First, letting $k_1$ be the largest index such that
$j_{k_1}-j_{k_1-1}> \ell^{pe}/M$, define
\[
\call^{(1)} = \{j_{k_1},\ldots,j_l\}\quad
\mbox{and}\quad \call^{(1),pe} = \bigl\{j_{k_1} - \ell^{pe}/(2M),
\ldots, j_l+ \ell^{pe}/2 \bigr\}.
\]
Next, letting $k_2 < k_1$ be the largest index such that
$j_{k_2}-j_{k_2-1}> \ell^{pe}/M^2$, define
\[
\call^{(2)} = \{j_{k_2},\ldots, j_{k_1} \},\qquad
\call^{(2), pe} =\bigl\{ j_{k_2}-\ell^{pe}/
\bigl(2M^2\bigr),\ldots, j_{k_1}+\ell^{pe}/(2M)
\bigr\}.
\]
Continue this process until for some $N$, $1 \leq N \leq l$, $k_N = 1$.
In this construction, for each $1 \leq k \leq N$, if we arrange all the
nodes of $\call^{(k),pe}$ in the ascending order, then the number of
nodes in front of $\call^{(k)}$ is significantly smaller than the
number of nodes behind $\call^{(k)}$.

In practice, we introduce a suboptimal but much simpler
patching approach as follows.
Fix a component $\call=\{j_1,\ldots,j_l\}$ of $\cU_p^*$. In this
approach, instead of splitting it into smaller sets and patching them
separately as in the previous approach, we patch the whole set $\call$ by
%
\begin{equation}
\label{easypatch} \call^{pe} = \bigl\{i\dvtx j_1-
\ell^{pe}/4 < i < j_l + 3\ell^{pe}/4 \bigr\},
\end{equation}
and estimate $\beta^{\call}$ using \eqref{obj of minimization}.
Our numeric studies show that two approaches have comparable performances.

Define
%
\begin{equation}
\label{lower bound (rate)} \rho^*_{\mathrm{cp}}(\vartheta, r)= \cases{ %
 \vartheta+r/4, &\quad $r/\vartheta\leq6+2\sqrt{10},$
\vspace*{2pt}\cr
3\vartheta+(r/2-\vartheta)^2/(2r), &\quad $r/\vartheta> 6+2\sqrt{10},$}
\end{equation}
where ``cp'' stands for change-point.
Choose the tuning parameters of CASE such that
%
\begin{equation}\quad
\label{paracond2} \ell^{pe}=2\log(p), \qquad u^{pe}=\sqrt{2\vartheta
\log(p)}\quad \mbox{and}\quad v^{pe}= \sqrt{2r\log(p)},
\end{equation}
that $(m+1)\vartheta\geq\rho^*_{\mathrm{cp}}(\vartheta,r)$, and that $0<q<
\frac{r}{4} (\sqrt{2}-1)^2$ [recall that we take $t(\hat{F}, \hat
{N}) =
2 q \log(p)$ for all $(\hat{F}, \hat{N})$ in the change-point setting].
Note that the choice of $\ell^{pe}$ is different from that in Section~\ref{subsec:ags}.
The main result in this section is the following theorem which is
proved in Section B in \citet{CASEsupp}. 
%
\begin{thmm} \label{thmm:changpt}
For the change-point model, the minimax Hamming distance satisfies
$\hamm_p^*(\vartheta, r, G) = L_p p^{1 - \rho^*_{\mathrm{cp}}(\vartheta, r)}$.
Furthermore, $\hat{\beta}^{\mathrm{case}}$ with the tuning parameters specified
above satisfies
\[
\sup_{\mu\in\Theta_p^*(\tau_p, a)} H_p\bigl(\hat{\beta}^{\mathrm{case}};
\varepsilon_p, \mu, G\bigr)\leq L_p p^{1 - \rho^*_{\mathrm{cp}}(\vartheta, r)} +
o(1).
\]
\end{thmm}

It is noteworthy that the exponent $\rho_{\mathrm{cp}}^*(\vartheta, r)$ has a
phase change depending on the ratios of
$r / \vartheta$. The insight is, when $r/\vartheta< 6 + 2 \sqrt{10}$,
the minimax Hamming distance is dominated by
the Hamming errors we make in distinguishing between an isolated
change-point and a pair of adjacent change-points, and when $r /
\vartheta> 6 + 2 \sqrt{10}$, the minimax Hamming distance is dominated
by the Hamming errors of distinguishing the case of consecutive
change-point triplets (say, change-points at $\{j-1, j, j-1\}$) from
the case where we do not have a change-point in the middle of the
triplets (i.e., the change-points are only at $\{j-1, j+1\}$).

\begin{figure}

\includegraphics{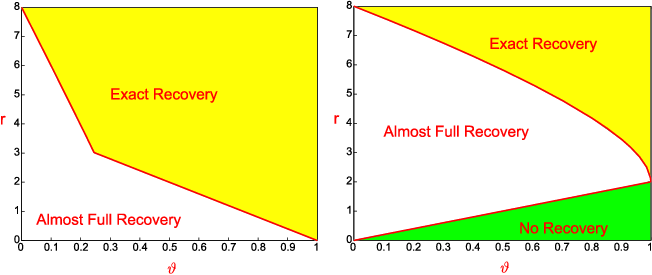}

\caption{Phase diagrams corresponding to the change-point model. Left:
CASE; the boundary is decided by $(4-10\vartheta)+2\sqrt
{(2-5\vartheta
)^2-\vartheta^2}$ (left part)
and $4(1-\vartheta)$ (right part). Right: hard thresholding; the upper
boundary is decided by $2(1+\sqrt{1-\vartheta})^2$
and the lower boundary is decided by~$2\vartheta$.} \label{fig:changpt}
\end{figure}
%

Similarly, the main results on the change-point problem can be
visualized with the phase diagram in
Figure~\ref{fig:changpt}. An interesting point is that it is possible
to have almost full recovery even when the signal
strength parameter $\tau_p$ is as small as $o(\sqrt{2 \log(p)})$. See
the proof of Theorem~\ref{thmm:changpt} for details.

Alternatively, one may apply the following approach to the change-point
problem. Treat the liner change-point model as a regression model $Y =
X \beta+ z$ as in Section~\ref{sec:intro} (page~\pageref{p1}), and let $W =
(X'X)^{-1} X'Y$ be the least-squares estimate. It is seen that
$W \sim N(\beta, \Sigma)$,
where we note that $\Sigma= (X'X)^{-1}$ is tridiagonal and coincides
with $H$. In this simple setting, a natural approach is to apply a
coordinate-wise thresholding $\hb^{\mathrm{thresh}}_j= W_j 1\{|W_j| > t\}$ to
locate the signals. But this
neglects the covariance of $W$ in detecting the locations of the
signals and is not optimal even with the ideal choice of thresholding
parameter $t_0$,
since the corresponding risk satisfies
\[
\sup_{\{\mu\in\Theta_p^*(\tau_p, a) \} } H_p\bigl(\hb^{\mathrm{thresh}}(t_0);
\varepsilon_p, \mu, G\bigr) = L_p p^{1-(r/2 + \vartheta)^2/(2r)}.
\]
The proof of this is elementary and omitted.
The phase diagram of this method is displayed in Figure~\ref{fig:changpt},
right panel, which suggests the method is nonoptimal.

Other popular methods in locating multiple change-points
include the global methods [\citet{HarLevy10,Olshen04,Tibshirani,YaoAn}] and local methods [e.g., SaRa in
\citet{SaRa}].
The global methods are usually computationally expensive and can hardly
be optimal due to the strong correlation nature of this problem. Our
procedure is related to the local methods
but is different in important ways. Our method exploits the graphical
structures and uses the GOSD to guide both the
screening and cleaning, but SaRa does not utilize the graphical
structures and can be shown to be nonoptimal.


To conclude the section, we remark that the change-point model
constitutes a special case of the settings we discuss in the paper,
where setting some of the tuning parameters is more convenient than in
the general case. First, for the change-point model, we can simply set
$\delta= 0$ and $\ell^{ps} = 0$. Second, there is an easy-to-compute
preliminary estimator available. On the other hand, the performance of
CASE is substantially better than the other methods in many situations.
We believe that CASE is potentially a very useful method in practice
for the change-point problem.

\section{Simulations} \label{sec:Simul}
We conducted a small-scale numeric study where we compare CASE and
several popular
approaches. The study contains two parts, Section~\ref{subsec:simuchang} and Section~\ref{subsec:simutoep}, where we
investigate the change-point model and the long-memory time series
model, respectively.

In this section, $s_p = p \eps_p$ for convenience. The core tuning
parameters for CASE are $(\tq, u^{pe}, v^{pe}, \ell^{ps}, \ell^{pe})$.
We streamline these tuning parameters in a way so they only depend on
two tuning parameters $(s_p, \tau_p)$ (calibrating the sparsity and the
minimum signal strength, resp.). Therefore, essentially, CASE only uses two
tuning parameters. Our experiments show that the performance of CASE is
relatively insensitive to these two tuning
parameters. Furthermore, these two tuning parameters can be set in a
data driven fashion, especially in the change-point model. See details below.

We set $m = 2$ so that in the screening stage of CASE, bivariate
screening is the highest order screening we use. At least for examples
considered here, using a higher-order screening does not have a
significant improvement.
For long-memory time series, we need a regularization parameter $\delta
$ (but we do not need it for the change-point model).
The guideline for choosing $\delta$ is to make sure the maximum degree
of GOSD is $15$ (say) or smaller. In this section, we choose
$\delta= 2.5/\log(p)$. The maximum degree of GOSD is much higher if we
choose a much smaller $\delta$; in this case, CASE has similar
performance, but is computationally much slower.

\subsection{Change-point model} \label{subsec:simuchang}
In this section, we use model \eqref{changept} to investigate the
performance of CASE in identifying multiple change-points. For a given
set of parameters $(p, \vartheta, r, a)$, we set $\eps_p =
p^{-\vartheta
}$ and $\tau_p = \sqrt{2 r \log(p)}$. First, we generate a $(p-1)
\times1$ vector $\beta$ by $\beta_j \stackrel{\mathrm{i.i.d.}}{\sim} (1-\eps
_p) \nu
_0 + \frac{\eps_p}{2} U(\tau_p, a \tau_p) + \frac{\eps_p}{2} U(-
a \tau
_p, -\tau_p)$, where $U(s,t)$ is the uniform distribution over $[s,t]$
[when $s = t$, $U(s, t)$ represents the point mass at $s$].
Next, we construct the
mean vector $\theta$ in model (\ref{changept}) by
$\theta_j=\theta_{j-1}+\beta_{j-1}$, $2 \leq j\leq p$. Last, we
generate the data vector $Y$ by $Y \sim N(\theta, I_p)$.

CASE, when applied to the change-point model, requires tuning
parameters $(m, \delta, \tq, \ell^{ps}, \ell^{pe}, u^{pe}, v^{pe})$.
Denote by $s_p \equiv p \eps_p = p^{1-\vartheta}$ the average number of
signals. Given $(s_p, \tau_p)$, we determine the tuning parameters as follows:
As mentioned earlier, we take $m =2$, $\delta= 0$ and $\ell^{ps} = 0$.
Also, we take $\ell^{pe}=10\log(p/s_p)$, $u^{pe}= \sqrt{2\log(p/s_p)}$
and $v^{pe}= \tau_p$.
$\tq$ contains thresholds $t(F,N)$ for each pair of sets $(F,N)$; we
take $t(F,N)=2q(F,N)\log(p)$
with
%
\begin{equation}
\label{q_simu} q(F,N)= 0.8 \times\cases{ %
\bigl(r
\tomega+ |F|\vartheta\bigr)^2/(4r\tomega), &\quad $\tomega>|F|\vartheta,$
\vspace*{2pt}\cr
r\tomega, &\quad $\tomega\leq|F|\vartheta,$}
\end{equation}
where $\vartheta=\log(p/s_p)$, $r=\tau_p^2/(2\log(p))$ and $\tomega
=\tomega(F,N)$ is given in \eqref{tomega(F,N)}.
With these choices, CASE only depends on two parameters $(s_p, \tau_p)$.

\textit{Experiment} 1a. We
compare CASE with the lasso [\citet{Tibshirani}], SCAD [\citet{FanLi}]
(penalty shape parameter $a = 3.7$), MC$+$ [\citet{MCP}] (penalty shape
parameter $\gamma= 1.1$) and SaRa [\citet{SaRa}]. For tuning parameters
$\lambda> 0$ and $h > 0$ (integer), SaRa takes the following form:
\[
\hat{\beta}_i^{\mathrm{SaRa}} = W_i \cdot1
\bigl\{|W_i|> \lambda\bigr\}\qquad \mbox{where } W_i =
\frac{1}{h} \Biggl(\sum_{j=i+1}^{i+h}Y_j
- \sum_{j=i-h+1}^{i}Y_j\Biggr).
\]
The tuning parameters for the lasso, SCAD, MC$+$ and SaRa are ideally set
(pretending we know $\beta$). For CASE,
all tuning parameters depend on $(s_p, \tau_p)$, so we implement the
procedure using the true values of $(s_p, \tau_p)$; this yields
slightly inferior results than that of setting $(s_p, \tau_p)$ ideally
(pretending we know $\beta$, as we do in the lasso, SCAD, MC$+$ and
SaRa), so our comparison in this setting is fair.
Note that even when $(s_p, \tau_p)$ are given, it is unclear how to set
the tuning parameters of the lasso, SCAD, MC$+$ and SaRa.

Fix $p= 5000$ and $a=1$. We let $\vartheta$ range in $\{0.3,0.45,0.6,
0.75\}$ and $\tau_p$ range in $\{3,3.5,\ldots,6.5\}$. The parameters
fall into the regime where exact-recovery is impossible. Table~\ref{tb:simu_changpt1} reports the average Hamming errors of $100$
independent repetitions. We see that CASE consistently outperforms
other methods, especially when $\vartheta$ is small, that is, signals
are less sparse.

We also observe that three \textit{global penalization methods}, the
lasso, SCAD and MC$+$, perform unsatisfactorily, with Hamming errors
comparable to the expected number of signals $s_p$. It suggests that
the \textit{global penalization methods} are not appropriate for the
change-point model when the signals are rare and weak. Similar
conclusions can be drawn in most experiments in this section. To save
space, we only report results of the lasso, SCAD and MC$+$ in this experiment.

\begin{table}
\caption{Comparison of Hamming errors (Experiment \textup{1a}; change-point model)}
\label{tb:simu_changpt1}
\begin{tabular*}{\textwidth}{@{\extracolsep{\fill}}ld{3.1}cd{3.1}d{3.1}d{3.1}d{3.1}d{3.1}d{3.1}@{}}
\hline
\multicolumn{1}{@{}l}{{$\bolds{\vartheta}$}} &
\multicolumn{1}{c}{$\bolds{s_p}$} & &
\multicolumn
{6}{c@{}}{$\bolds{\tau_p}$}\\
\hline
& & & 4.0 & 4.5 & 5.0 & 5.5 & 6.0 & 6.5\\
{0.3} & 338.4 &
CASE & 105.8 & 63.9 & 37.6 & 18.5 & 8.9 & 4.8 \\
& & lasso & 371.7 & 370.0 & 371.5 & 370.1 & 371.5 & 369.8 \\
& & SCAD & 370.6 & 368.3 & 370.5 & 368.2 & 369.3 & 369.2\\
& & MC$+$ & 374.0 & 372.1 & 374.3 & 372.5 & 373.6 & 373.1\\
& & SaRa & 175.6 & 144.0 & 107.8 & 73.7 & 49.0 & 32.3\\[3pt]
& & & 3.0 & 3.5 & 4.0 & 4.5 & 5.0 & 5.5\\
{0.45} &108.3 &
CASE & 50.1 & 35.5 & 26.3 & 20.0 & 12.8 & 6.2 \\
& & lasso & 103.2 & 104.1 & 103.8 & 103.8 & 104.9 & 104.3\\
& & SCAD & 101.8 & 102.7 & 102.1 & 102.0 & 102.9 & 102.5 \\
& & MC$+$ & 103.7 & 104.7 & 104.4 & 104.3 & 105.4 & 104.8 \\
& & SaRa & 78.9 & 72.0 & 66.2 & 63.4 & 61.9 & 60.4 \\[3pt]
& & & 3.0 & 3.5 & 4.0 & 4.5 & 5.0 & 5.5\\
0.6 & 30.2 &
CASE & 14.4 & 11.1 & 8.9 & 6.7 & 5.0 & 3.9 \\
& & lasso & 29.3 & 29.2 & 29.3 & 29.7 & 27.7 & 29.3\\
& & SCAD & 27.7 & 27.7 & 27.9 & 27.4 & 26.1 & 27.1\\
& & MC$+$ & 29.8 & 29.8 & 29.8 & 30.2 & 28.4 & 29.8\\
& & SaRa & 20.4 & 17.0 & 13.6 & 10.9 & 8.6 & 6.8\\[3pt]
& & & 3.0 & 3.5 & 4.0 & 4.5 & 5.0 & 5.5\\
0.75 & 8.4 &
CASE & 3.5 & 2.9 & 2.4 & 1.8 & 1.6 & 1.3\\
& & lasso & 8.2 & 8.3 & 8.5 & 8.8 & 8.0 & 8.5\\
& & SCAD & 6.8 & 7.0 & 7.0 & 6.9 & 6.6 & 6.6\\
& & MC$+$ & 8.7 & 8.8 & 9.1 & 9.2 & 8.7 & 9.1\\
& & SaRa & 5.2 & 4.5 & 3.8 & 3.0 & 2.4 & 2.0\\
\hline
\end{tabular*}
\end{table}

\textit{Experiment} 1b. In this experiment, we investigate the
performance\break of CASE with $(s_p, \tau_p)$ estimated by SaRa; we call
this the \textit{adaptive\break CASE}. In detail, we estimate $(s_p, \tau
_p)$ by
$\hat{s}_p = \sum_{j=1}^p 1\{\hat{\beta}^{\mathrm{SaRa}}_{j}\neq0\}$ and
$\hat
{\tau}_p =\break \operatorname{median} (  \{|\hat{\beta
}^{\mathrm{SaRa}}_j|: \hat
{\beta}_j^{\mathrm{SaRa}}\neq0  \}  )$,
where the tuning parameters $(\lambda, h)$ of SaRa are determined by minimizing
$\operatorname{BIC}(\hat{\beta}) = \frac{1}{2}\Vert Y-X\hat{\beta
}\Vert
^2 + \log(p) \cdot\Vert\hat{\beta}\Vert_0$;
this is a slight modification of Bayesian information criteria (BIC).

For experiment, we use the same setting as in Experiment 1a. Table~\ref{tb:simu_changpt2} reports the average Hamming errors of CASE, SaRa and
the adaptive CASE based on $100$ independent repetitions. First, the
adaptive CASE [CASE but $(s_p, \tau_p)$ are estimated by SaRa] has a
very similar performance to CASE. Second, although the adaptive CASE
uses SaRa as the preliminary estimator, its performance is
substantially better than that of SaRa (and other methods in the same
setting; see Experiment~1a).

\begin{table}
\caption{Comparison of Hamming errors (Experiment \textup{1b}). ``adCASE'' stands
for adaptive CASE, where $(s_p, \tau_p)$ are estimated from SaRa [where
$(\lambda, h)$ are set by BIC]} \label{tb:simu_changpt2}
\begin{tabular*}{\textwidth}{@{\extracolsep{\fill}}ld{3.1}cd{3.1}d{3.1}d{3.1}d{3.1}d{3.1}d{3.1}@{}}
\hline
\multicolumn{1}{@{}l}{{$\bolds{\vartheta}$}} &
\multicolumn{1}{c}{$\bolds{s_p}$} & &
\multicolumn
{6}{c@{}}{$\bolds{\tau_p}$}\\
\hline
& & & 4.0 & 4.5 & 5.0 & 5.5 & 6.0 & 6.5\\
0.3 & 338.4 &
CASE & 105.8 & 63.9 & 37.6 & 18.5 & 8.9 & 4.8 \\
& & adCASE & 100.3 & 63.6 & 37.8 & 18.6 & 8.9 & 4.8 \\
& & SaRa & 190.7 & 162.0 & 131.3 & 98.0 & 68.2 & 47.1\\[3pt]
& & & 3.0 & 3.5 & 4.0 & 4.5 & 5.0 & 5.5\\
0.45 & 108.3 &
CASE & 50.1 & 35.5 & 26.3 & 20.0 & 12.8 & 6.2 \\
& & adCASE & 48.6 & 33.9 & 26.0 & 20.8 & 16.6 & 9.7 \\
& & SaRa & 86.1 & 76.7 & 71.4 & 66.7 & 65.0 & 62.8 \\[3pt]
& & & 3.0 & 3.5 & 4.0 & 4.5 & 5.0 & 5.5\\
0.6 & 30.2 &
CASE & 14.4 & 11.1 & 8.9 & 6.7 & 5.0 & 3.9 \\
& & adCASE & 14.0 & 11.0 & 8.8 & 6.5 & 4.8 & 3.4\\
& & SaRa & 35.7 & 28.5 & 24.1 & 19.9 & 15.8 & 11.9\\[3pt]
& & & 3.0 & 3.5 & 4.0 & 4.5 & 5.0 & 5.5\\
0.75 & 8.4 &
CASE & 3.5 & 2.9 & 2.4 & 1.8 & 1.6 & 1.3\\
& & adCASE & 3.7 & 3.0 & 2.2 & 1.8 & 1.5 & 1.3\\
& & SaRa & 13.3 & 11.5 & 8.0 & 5.2 & 4.0 & 2.9\\
\hline
\end{tabular*}
\end{table}

\textit{Experiment} 2. In this experiment, we consider the
post-filtering model, model~(\ref{modeladd}), associated with the
change-point model, and illustrate that the seeming simplicity of this
model (where $DG = I_p$, and $DGD'$ is tri-diagonal) does not mean
it is a trivial setting for variable selection. In particular, if we
naively apply the $L^0/L^1$-penalization to the post-filtering model,
we end up with naive soft/hard thresholding; we illustrate our point by
showing that CASE significantly outperforms
naive thresholding (since we use Hamming distance as the loss function,
there is no difference between soft and hard thresholding). For both
CASE and
naive thresholding, we set tuning parameters assuming $(s_p, \tau_p)$
as known. The threshold of naive thresholding is set as $(r+2\vartheta
)^2/(2r)\cdot\log(p)$, where $\vartheta=\log(p/s_p)$ and $r=\tau
_p^2/(2\log(p))$; this threshold choice is known as theoretically optimal.

Fix $p=10^6$ and $a=1$ (so that the signals have equal strengths). Let
$\vartheta$ range in $\{0.35, 0.5, 0.75\}$, and $\tau_p$ range in $\{
5,\ldots, 13\}$.
Table~\ref{tb:simu_changpt3} reports the average Hamming errors of $50$
independent repetitions, which show that CASE outperforms naive
thresholding in most cases, especially when $\vartheta$ is small or
$\tau_p$ is small. It
suggests that the post-filtering model remains largely nontrivial, and
to deal with it, we need sophisticated methods.

\begin{table}
\tabcolsep=0pt
\caption{Comparison of Hamming errors (Experiment 2; change-point
model), $p=10^6$. ``nHT'' stands for naive hard thresholding} \label
{tb:simu_changpt3}
\begin{tabular*}{\textwidth}{@{\extracolsep{\fill}}ld{4.0}cd{4.1}d{4.1}d{4.1}d{3.1}
d{3.1}d{3.1}d{2.1}d{2.1}c@{}}
\hline
 &
 & &
\multicolumn
{9}{c@{}}{$\bolds{\tau_p}$}\\[-6pt]
&&&
\multicolumn{9}{c@{}}{\hrulefill}\\
\multicolumn{1}{@{}l}{$\bolds{\vartheta}$}&\multicolumn{1}{c}{{$\bolds{s_p}$}} & & \textbf{5} & \textbf{6} & \textbf{7} &
\textbf{8} & \textbf{9} & \textbf{10} & \textbf{11} & \textbf{12} &
\multicolumn{1}{c@{}}{\textbf{13}}\\
\hline
0.35 & 7943 &
CASE & 956.7 & 332.6 & 117.5 & 49.1 & 24.1 & 13.9 & 10.6 & 7.7 & 7.3 \\
& & nHT & 4430.5 & 2381.3 & 1085.8 & 418.1 & 139.7 & 41.9 & 11.0 & 2.5
& 0.5 \\[3pt]
0.50 & 1000 &
CASE & 195.3 & 68.8 & 20.8 & 5.0 & 1.3 & 0.7 & 0.4 & 0.1 & 0.2\\
& & nHT & 767.9 & 489.0 & 250.8 & 105.3 & 38.4 & 12.4 & 3.5 & 0.7 &
0.2\\[3pt]
0.75 & 32 & CASE & 9.3 & 3.1 & 2.3 &
0.4 & 0.1 & 0.1 & 0.1 & 0.0 & 0.0\\
& & nHT & 31.1 & 25.6 & 15.7 & 8.3 & 3.2 & 1.8 & 0.5 & 0.0 & 0.0\\
\hline
\end{tabular*}
\end{table}

\textit{Experiment} 3. In this experiment, we fix $(p,\vartheta,
\tau
_p)= (5000, 0.50, 4.5)$, and let $a$ range in $\{1,1.5,\ldots, 3\}$ (so
signals may have different strengths). We investigate a case where the
signals have the ``half-positive-half-negative'' sign pattern, that is,
$\beta_j\stackrel{\mathrm{i.i.d.}}{\sim}(1-\eps_p)\nu_0 + \frac{\eps_p}{2}
U(\tau
_p, a\tau_p) + \frac{\eps_p}{2} U(-a\tau_p, - \tau_p)$, and a case
where the signals have the ``all-positive'' sign pattern, that is,
$\beta_j\stackrel{\mathrm{i.i.d.}}{\sim}(1-\eps_p)\nu_0 + \eps_p U(\tau_p,
a\tau_p)$.
We compare CASE with SaRa for different values of $a$ and sign patterns
(we do not include the lasso, SCAD, MC$+$ in this particular experiment,
for at least for the experiments reported above, they are inferior to SaRa).
The tuning parameters for both CASE and SaRa are set ideally as in
Experiment 1a.
The results of $50$ independent repetitions are reported in Table~\ref{tb:simu1_exp4}, which suggest that CASE uniformly outperforms SaRa for
various values of $a$ and the two sign patterns.

\begin{table}[b]
\caption{Comparison of Hamming errors (Experiment 3; change-point
model) for different choices of $a$ (the ratio between the maximum and
minimum signal strengths) and for two sign patterns ``half--half'' and
``all positive.'' $p=5000$, $\vartheta= 0.5$, $s_p =70.7$ and $\tau
_p=4.5$}
 \label{tb:simu1_exp4}
\begin{tabular*}{\textwidth}{@{\extracolsep{\fill}}lcd{2.2}d{2.2}d{2.2}d{2.2}d{2.2}@{}}
\hline
& & \multicolumn{5}{c@{}}{$\bolds{a}$} \\ [-6pt]
& & \multicolumn{5}{c@{}}{\hrulefill} \\
& & \multicolumn{1}{c}{\textbf{1}} & \multicolumn{1}{c}{\textbf{1.5}} & \multicolumn{1}{c}{\textbf{2}} & \multicolumn{1}{c}{\textbf{2.5}} & \multicolumn{1}{c@{}}{\textbf{3}} \\
\hline
{Half--half} & CASE & 14.26 & 6.32 & 5.50 & 4.78 & 4.56
\\
& SaRa & 24.98 & 18.96 & 16.56 & 14.00 & 12.50 \\[3pt]
{All-positive} & CASE & 13.44 & 6.18 & 4.90 & 5.38 &
4.14 \\
& SaRa & 24.26 & 18.58 & 16.80 & 13.66 & 12.12\\
\hline
\end{tabular*}
\end{table}

\subsection{Long-memory time series model} \label{subsec:simutoep}
In this section, we investigate long-memory time series, focusing on
the $\operatorname{FARIMA}(0,\phi,0)$ process [\citet{FanTimeS}], where
$\phi$ is the long-memory parameter.
We let $X = G^{1/2}$ where $G$ is constructed according to \eqref
{lmG}--\eqref{f in FARIMA}. For $\beta$ generated in ways to be
specified, we let $Y \sim N(X\beta, I_p)$.

CASE uses tuning parameters $(m,\delta, \tq, \ell^{ps}, \ell^{pe},
u^{pe}, v^{pe})$, which are set in the same way as in the change-point
model, except for two differences. First, different from that in the
change-point model,
we need a regularization parameter $\delta$ which we set as $2.5/\log
(p)$. Second, we take $\ell^{ps} = \ell^{pe}/2$.

\textit{Experiment} 4a. In this experiment, we compare CASE with the
lasso, SCAD (shape parameter $a=3.7$) and MC$+$ (shape parameter $\gamma=2$).
Fixing $p=5000$ and $\phi=0.35$, we let $\vartheta$ range in $\{ 0.35,
0.45, 0.55\}$, and let $\tau_p$ range in
$\{ 4,\ldots, 8 \}$. For each pair of $(\vartheta, \tau_p)$, we
generate the vector $\beta$ by $\beta_j \stackrel{\mathrm{i.i.d.}}{\sim}
(1-\eps
_p)\nu_0 + \frac{\eps_p}{2} \nu_{\tau_p} + \frac{\eps_p}{2} \nu
_{-\tau_p}$.
Similar to that in Experiment 1a, the tuning parameters of CASE are set
assuming $(s_p, \tau_p)$ as known, and the tuning parameters of the
lasso, SCAD and MC$+$ are set ideally to minimize the Hamming error
(assuming $\beta$ is known). By similar argument as in Experiment 1a,
the comparison is fair.
Table~\ref{tb:simu_toep1} reports the average Hamming errors based on
$100$ independent repetitions. The results suggest that CASE
outperforms the lasso and SCAD, and has a comparable performance to
that of MC$+$.

\begin{table}
\caption{Comparison of Hamming errors (Experiment \textup{4a}). The Gram matrix
is the population covariance matrix of the $\operatorname
{FARIMA}(0,\phi
,0)$ process with $\phi=0.35$. $p=5000$}
 \label{tb:simu_toep1}
\begin{tabular*}{\textwidth}{@{\extracolsep{\fill}}ld{3.1}cd{3.1}d{2.1}d{2.1}d{2.1}d{2.1}@{}}
\hline
 &  & & \multicolumn{5}{c@{}}{$\bolds{\tau_p}$}\\[-6pt]
  &  & & \multicolumn{5}{c@{}}{\hrulefill}\\
\multicolumn{1}{@{}l}{{$\bolds{\vartheta}$}}& \multicolumn{1}{c}{{$\bolds{s_p}$}}& &
 \multicolumn{1}{c}{\textbf{4}} & \multicolumn{1}{c}{\textbf{5}} & \multicolumn{1}{c}{\textbf{6}} & \multicolumn{1}{c}{\textbf{7}} & \multicolumn{1}{c@{}}{\textbf{8}} \\
\hline
{0.35} &253.7
& CASE & 118.0 & 60.7 & 26.3 & 9.5 & 4.3\\
& & lasso & 145.2 & 91.6 & 60.2 & 37.4 & 26.0\\
& & SCAD & 140.6 & 87.0 & 42.8 & 19.5 & 8.0\\
& & MC$+$ & 108.6 & 50.2 & 20.4 & 7.4 & 2.6\\[3pt]
0.45 & 108.3
& CASE & 60.3 & 27.7 & 11.8 & 4.0 & 1.9\\
& & lasso & 65.6 & 40.0 & 23.2 & 13.5 & 7.7\\
& & SCAD & 64.0 & 37.7 & 19.6 & 9.2 & 3.9\\
& & MC$+$ & 52.0 & 23.6 & 8.6 & 3.0 & 1.0\\[3pt]
0.55 & 46.2
& CASE & 27.9 & 13.4 & 4.3 & 1.4 & 0.5\\
& & lasso & 27.8 & 16.0 & 8.0 & 3.9 & 2.1\\
& & SCAD & 27.0 & 15.2 & 7.0 & 3.1 & 1.2\\
& & MC$+$ & 23.4 & 10.6 & 3.1 & 0.7 & 0.2\\
\hline
\end{tabular*}
\end{table}

\textit{Experiment} 4b. We investigate the setting where ``signal
cancellation'' is more severe
than that in Experiment 4b. Toward this end,
we use the same setting as in Experiment 4a, except for
that $\beta$ is generated in a way that signals appear in adjacent
pairs with opposite signs,
$(\beta_{2j-1}, \beta_{2j})\stackrel{\mathrm{i.i.d.}}{\sim} (1-\eps_p)\nu
_{(0,0)} +
\eps_p\nu_{(\tau_p, -\tau_p)}$, $1 \leq j \leq p/2$, where $\nu
_{(a,b)}$ is the point mass at $(a,b)\in\mathbb{R}^2$. Hamming errors
based on $100$ repetitions are reported in Table~\ref{tb:simu_toep2},
suggesting that CASE significantly outperforms all the other methods.

It is noteworthy that MC$+$ behaves much more unsatisfactorily here than
in Experiment 4a, and the main reason is that MC$+$ does not adequately
address ``signal cancellation.'' In contrast, one of the major
advantages of CASE is that it addresses adequately the ``signal
cancellation''; this is why it has satisfactory performance in both
Experiments 4a and 4b.

%
\begin{table}
\caption{Comparison of Hamming errors (Experiment \textup{4b})}
 \label{tb:simu_toep2}
\begin{tabular*}{\textwidth}{@{\extracolsep{\fill}}ld{3.1}cd{3.1}d{2.1}d{2.1}d{2.1}d{2.1}@{}}
\hline
 &  & & \multicolumn{5}{c@{}}{$\bolds{\tau_p}$}\\[-6pt]
  &  & & \multicolumn{5}{c@{}}{\hrulefill}\\
\multicolumn{1}{@{}l}{{$\bolds{\vartheta}$}}& \multicolumn{1}{c}{{$\bolds{s_p}$}}& &
 \multicolumn{1}{c}{\textbf{4}} & \multicolumn{1}{c}{\textbf{5}} & \multicolumn{1}{c}{\textbf{6}} & \multicolumn{1}{c}{\textbf{7}} & \multicolumn{1}{c@{}}{\textbf{8}} \\
\hline
0.35 & 253.7
& CASE & 138.6 & 60.8 & 23.3 & 7.2 & 1.8\\
& & lasso & 223.0 & 158.9 & 97.9 & 54.8 & 27.1\\
& & SCAD & 257.5 & 156.8 & 95.1 & 52.1 & 25.1\\
& & MC$+$ & 206.7 & 129.2 & 68.6 & 33.4 & 13.6\\[3pt]
0.45 & 108.3
& CASE & 75.7 & 36.4 & 13.3 & 3.7 & 0.9\\
& & lasso & 100.0 & 84.7 & 58.4 & 32.2 & 15.9\\
& & SCAD & 99.2 & 83.2 & 56.6 & 30.6 & 14.9\\
& & MC$+$ & 98.1 & 76.0 & 44.8 & 21.5 & 8.9\\[3pt]
0.55 & 46.2
& CASE & 38.6 & 20.0 & 8.9 & 3.6 & 1.0\\
& & lasso & 45.4 & 40.1 & 31.0 & 20.6 & 10.9\\
& & SCAD & 45.0 & 39.4 & 30.1 & 19.6 & 9.9\\
& & MC$+$ & 44.9 & 38.4 & 26.3 & 14.8 & 6.8\\
\hline
\end{tabular*}
\end{table}

\begin{table}[b]
\caption{Hamming errors for CASE, applied when $(s_p, \tau_p)$ are
misspecified as $p^{1 - \tilde{\vartheta}}$ and $\tilde{\tau}_p$,
respectively (Experiment 5). The Gram matrix is the population
covariance matrix of the $\operatorname{FARIMA}(0,\phi,0)$ process with
$\phi=0.35$. $p=5000$}
 \label{tb:simu_toepmisspec}
\begin{tabular*}{\textwidth}{@{\extracolsep{\fill}}lcd{2.1}d{2.2}d{2.1}d{2.2}d{2.1}d{2.2}d{2.1}@{}}
\hline
$\vartheta=0.35, \tau_p=6$ & $\tilde{\vartheta}$ & 0.2 & 0.25 &
0.3 & 0.35 & 0.4 & 0.45 & 0.5 \\
$s_p=253.7$& & 27.8 & 24.8 & 23.2 & 23.2 & 24.5 & 26.3 & 48.9\\[3pt]
& $\tilde{\tau}_p$ & 4 & 5 & 5.5 & 6 & 6.5 & 7 & 8\\
& & 47.3 & 30.2 & 25.3 & 23.2 & 23.9 & 26.9 & 42.7\\[6pt]
$\vartheta=0.55, \tau_p=5$
& $\tilde{\vartheta}$ & 0.4 & 0.45 & 0.5 & 0.55 & 0.6 & 0.65 & 0.7 \\
$s_p=46.2$& & 21.8 & 19.0 & 19.3 & 19.8 & 21.7 & 25.5 & 25.4\\[3pt]
& $\tilde{\tau}_p$ & 3 & 4 & 4.5 & 5 & 5.5 & 6 & 7\\
& & 23.8 & 22.2 & 20.8 & 19.8 & 21.0 & 23.9 & 29.0\\
\hline
\end{tabular*}
\end{table}

\textit{Experiment} 5. In some of the experiments above, we set the
tuning parameters of CASE assuming
$(s_p, \tau_p)$ as known. It is therefore interesting to investigate
how the misspecification of $(s_p, \tau_p)$ affects the performance of
CASE. Fix $p=5000$ and $\phi=0.35$. We consider two combinations of
$(\vartheta, \tau_p)$: $(\vartheta, \tau_p) = (0.35, 6), (0.55, 5)$.
The vector $\beta$ is generated in the same way as in Experiment 4b,
with the signals appearing in adjacent pairs.
We fix one parameter of $(s_p, \tau_p)$ and mis-specify the other
[since $s_p$ is not on the same scale as $\tau_p$, the results are
reported based on the misspecification of $(\vartheta, \tau_p)$,
instead of $(s_p, \tau_p)$; recall here $s_p = p^{1 - \vartheta}$]. We
then apply CASE with tuning parameters set based on the misspecified
values of $(s_p, \tau_p)$. Table~\ref{tb:simu_toepmisspec} reports the
average Hamming errors of $50$ independent repetitions, which is a
rather flat function of misspecified values of $\vartheta$ (with $\tau
_p$ fixed) or
of misspecified values of $\tau_p$ (with $\vartheta$ fixed).
In comparison, the Hamming errors of the lasso are 97.9 and 40.1 in the
two settings, respectively, with the tuning parameter ideally set as in
Experiment 1a.
This suggests that CASE is relatively insensitive to the
misspecification of $(s_p, \tau_p)$, and outperforms the lasso as long
as the misspecification of $(s_p, \tau_p)$ is not severe.

\begin{figure}[b]

\includegraphics{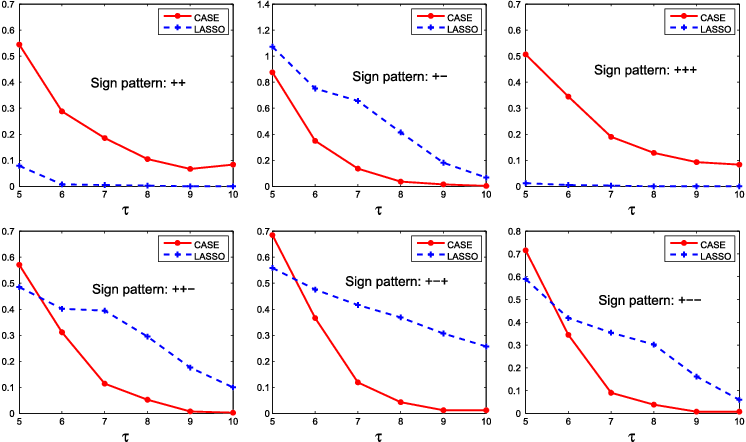}

\caption{Comparison of Hamming errors (Experiment 6).
The Gram matrix is the population covariance matrix of the
$\operatorname{FARIMA}(0,\phi,0)$
process with $\phi=0.35$. $p=5000$, $\vartheta=0.75$ and $s_p=32$.
Signals appear in adjacent pairs (Panels 1--2) or adjacent triplets
(Panels 3--6),
with different sign patterns specified in the panels, associated with
different level of
``signal cancellation.'' CASE outperforms lasso when signal cancellation
is severe.} \label{fig:simu2_exp3}
\end{figure}

\textit{Experiment} 6. In this experiment, we continue to investigate
the effect of
``signal cancellation,'' where we compare CASE with the lasso using
several choices of signal patterns (and so
the levels of ``signal cancellation'' vary).
Fix $p= 4998$, $\phi=0.35$, $\vartheta= 0.75$, and let $\tau_p$ range
in $\{ 5,\ldots, 10\}$.
The experiment contains two parts. In the first part, we partition $\{
1, 2,\ldots, p\}$ into $p/2$ blocks, where each block contains two
adjacent indices. In $(1 - \eps_p)$ fraction of the blocks, $\beta_j =
0$. In the $\eps_p$ fraction of the blocks, we have either that $\beta
_j = \tau_p$ for both $j$ in the block (we denote the sign pattern by
``$++$''), or that $\beta_j = \tau_p$ if $j$ is the first index in the
block and $\beta_j = - \tau_p$ otherwise (sign pattern ``$+-$''). In
the second part, we partition $\{1, 2,\ldots, p\}$ into $p/3$ blocks
(block size is $3$), and
generate $\beta$ in a similar fashion, but with four different sign
patterns in each block: ``$+++$,'' ``$++-$,'' ``$+-+$'' and ``$+--$.''
Hamming errors based on $50$ independent repetitions are reported in
Figure~\ref{fig:simu2_exp3}. The results suggest that (a) when the sign
patterns are ``$++$'' or ``$+++$,''
signal cancellation has negligible effects, and CASE is inferior to the
lasso, (b) when the sign patterns are
``$+-$,'' ``$++-$,'' ``$+-+$'' and ``$+--$,'' the effect of signal
cancellation kicks in, and CASE outperforms the lasso in most cases.
This is consistent with our theoretic insight that CASE adequately
addresses signal cancellation, but the lasso does not.

\section{Discussion} \label{sec:Discu}
Variable selection when the Gram matrix $G$ is nonsparse
is a challenging problem. We approach this problem by first
sparsifying $G$ with a finite order linear filter, and then
constructing a sparse graph GOSD. The key insight is that,
in the post-filtering data, the true signals live in many
small-size components that are disconnected in GOSD, but we do not know where.
We propose CASE as a new approach to variable selection.
This is a two-stage screen and clean method, where we first use a
covariance-assisted multivariate screening to identify candidates for
such small-size components, and then re-examine each candidate with
penalized least squares. In both stages, to overcome the problem of
information leakage, we employ a delicate patching technique.

We develop an asymptotic framework focusing on the regime where the signals
are rare and weak so that successful variable selection is challenging
but is
still possible. We show that CASE achieves the optimal rate of
convergence in Hamming distance across a wide class of situations where
$G$ is nonsparse but sparsifiable.
Such optimality cannot be achieved by many popular methods, including
but not
limited to the lasso, SCAD, MC$+$ and Dantzig selector. When $G$ is
nonsparse, these methods are not expected to behave well even when the
signals are strong.
We have successfully applied CASE to two different applications: the
change-point problem and the long-memory times series.

Compared to the well-known method of marginal screening [\citet{FanLv,Wasserman}],
CASE employs a covariance-assisted multivariate screening procedure, so
that it is theoretically more effective than marginal screening, with
only a moderate increase in the computational complexity.
CASE is closely related to the graphical lasso [\citet{graphiclasso,Meinshausen}],
which also attempts to exploit the graph structure. However,
the setting considered here is very different from that in \citet
{graphiclasso} and \citet{Meinshausen},
and our emphasis on optimality is also very different.

The paper is closely related to the recent work \citet{JZ11} [see also
\citet{UPS}], but is different in important ways. The work in \citet
{JZ11} is motivated by recent literature of compressive sensing and
genetic regulatory network, and is largely focused on the
case where the Gram matrix $G$ is sparse in an unstructured fashion.
The current work is motivated
by the recent interest on DNA-copy number variation and long-memory time
series, and is focused on the case where there are strong dependence
between different design variables, so $G$ is usually nonsparse and sometimes
ill-posed. To deal with the
strong dependence, we have to use a finite-order linear filter and
delicate patching techniques. Additionally, the current paper also
studies applications to the long-memory
time series and the change-point problem which have not been considered
in \citet{JZ11}.
Especially, the studies on the change-point problem encompasses very
different and
very delicate analysis on both the derivation of the lower bound and
upper bound which we have not seen before in the literature.
For these reasons, the two papers have very different scopes and
techniques, and
the results in one paper cannot be deduced from those in the other.

In this paper, we are primarily interested in the linear model, model
\eqref{model}, but
CASE is applicable in much broader settings.
For example, in model \eqref{model},
we assume that the coordinates of $z$ have the same variance $\sigma
^2$, and $\sigma$ is known (and so without loss of generality, we
assume $\sigma= 1$). When $\sigma$ is unknown, the main results in
this paper continue to hold, provided that we can estimate $\sigma$
consistently [say, except for a probability of $o(p^{-2})$, there is an
estimate $\hat{\sigma}$ such that $|\hat{\sigma}/\sigma- 1| = o(1)$].
Such an estimator can be obtained by adapting the scaled-lasso approach
by \citet{SunZhang} or the refitted cross validation by \citet{RCV} to
the post-filtering model \eqref{modeladd}. Correspondingly, we need to
modify the tuning parameters of CASE slightly. For example, in the
$\mathit{PS}$-step, $\mathcal{Q}$ is replaced by $\hat{\sigma}^2\mathcal
{Q}\equiv\{
\hat{\sigma}^2 t(F,N)\}$, and in the
$\mathit{PE}$-step, $u^{pe}$ and $v^{pe}$ are replaced by $\hat{\sigma}u^{pe}$ and
$\hat{\sigma} v^{pe}$, respectively.

Also, in model \eqref{model}, we have assumed that the coordinates of
$z$ are Gaussian distributed.
Such an assumption can also be relaxed. In fact, in the core of CASE is
the analysis of low-dimensional sub-vectors of $\tilde{Y}=X'Y$, where
we note that each coordinate of $\tilde{Y}$ has the form of
$b_0 + a' z$
for some constant $b_0$ and $n \times1$ nonstochastic vector $a$.
Note that $a$ only depends on
the design matrix and the index of the coordinate of $\tilde{Y}$ (so
there are $p$ different vectors $a$ at most).
Essentially, the Gaussian assumption is only
required for $a' z$ for all $p$ different choices of $a$. Note that
even when $z$ is non-Gaussian,
$a'z$ could be approximately Gaussian by central limit theorem; this
holds, for example, for the long-memory time series considered in the
paper. As a result, the Gaussian assumption on $z$ can be largely relaxed.

The main results in this paper can be extended in many other directions.
For example, we have used a rare and weak signal model
where the signals are randomly generated from a two-component
mixture. The main results continue to hold if we choose to use
a much more general model,
as long as the signals live in small-size
isolated ``islands'' in the post-filtering data, where each island is a
connected subgraph of GOSD.

Also, we have been focused on the change-point model and the long-memory
time series model, where the post-filtering matrices have polynomial
off-diagonal
decay and are sparse in a structured fashion.
CASE can be extended to more general settings, where the sparsity of
the post-filtering matrices are unstructured, provided that we modify
the patching technique accordingly: the patching set can be constructed
by including nodes which are connected to the original set through a
short-length path in the GOSD.

Still another extension is that the Gram matrix can be sparsified by an
operator~$D$, but $D$ is not necessary linear filtering. To apply CASE
to this setting, we need to design specific patching technique. For
example, when $D^{-1}$ is sparse, for a given $\call$, we can construct
$\call^{pe}=\{j\dvtx |D^{-1}(i,j)|>\delta_1, \operatorname{ for some }
i\in
\call\}$, where $\delta_1$ is a chosen threshold.

The paper is closely related to recent literature on DNA copy number
variation and financial data analysis, and it
is of interest to further investigate such connections. To save space,
we leave
explorations along this line to the future.

A key component of our approach is the notion of ``sparsifiability,''
meaning that we can make the Gram matrix $G$ sparse by some simple operations.
Usually, to find such a simple operation, we need a good understanding
about the structure of $G$. In some applications, it is not hard to
find such an operation:
\begin{itemize}
\item In compressive sensing or genome-wide association study (GWAS),
where the rows of the design matrix $X$ are i.i.d. samples from a
$p$-dimensional distribution of zero means and a sparse covariance
matrix $\Sigma$. In Compressive Sensing, $\Sigma$ is usually
proportional to the identity matrix, and in GWAS, $\Sigma$ is a banded
matrix. In such examples, $G$ is already sparse, and so sparsifiable.
\item The current paper is largely motivated by the change-point
problem and long-memory time series models, where $G$ can be
sparsifiable by a liner filter $D$.
\item Another example of sparsifiability is that $G$ is the sum of a
symmetric low-rank matrix $L$ and a sparse matrix $S$; when spectral
norm of $S$ is much smaller than the smallest nonzero eigenvalue of
$L$, $G$ is sparsifiable
by principle component analysis (PCA).
\end{itemize}
In more complicated case where we have little understanding about the
structure~$G$, how to
sparsify it with a simple operation is a nontrivial problem, though we
can always try linear filtering, PCA or both.
We leave the study along this line for future.

The notion of ``sparsifiability'' may apply to both nonrandom design
models and random design models.
For a random-design model where rows of $X$ are i.i.d. samples from a
$p$-dimensional distribution with zero means and covariance matrix~$\Sigma$,
that $G$ is sparsifiable usually means that $\Sigma$ is
sparsifiable.
In more general case, how to sparsify the Gram matrix $G$ is a
nontrivial problem, and we leave such discussions to the future work.

\section*{Acknowledgments}
The authors would like to thank Ning Hao, Philippe Rambour and David
Siegmund for helpful pointers and comments.


\begin{supplement}[id=suppA]
\stitle{\spaceskip=0.6em plus 0.06em minus 0.06em{Supplement to ``Covariate assisted screening and estimation''}}
\slink[doi]{10.1214/14-AOS1243SUPP} 
\sdatatype{.pdf}
\sfilename{aos1243\_supp.pdf}
\sdescription{Owing to space constraints, the technical proofs are
relegated a
supplementary document. It contains Sections A--C.}
\end{supplement}

%

\printaddresses
\end{document}